\documentclass[a4paper]{article}
\usepackage{amsthm,amssymb,amsmath,enumerate}
\usepackage{algorithm}

\usepackage{tikz}
\usetikzlibrary{backgrounds}
\usetikzlibrary{decorations}
\usetikzlibrary{decorations.pathmorphing}

\tikzstyle{hvertex}=[thick,circle,inner sep=0.cm, minimum size=2mm, fill=white, draw=black]
\tikzstyle{hedge}=[very thick]
\tikzstyle{rededge}=[very thick,red]
\tikzstyle{point}=[draw,circle,inner sep=0.cm, minimum size=1mm, fill=black]
\tikzstyle{pointer}=[thick,->,shorten >=2pt,color=hellgrau]
\tikzstyle{pathedge}=[hedge,decorate, decoration={random steps,segment length=3pt,amplitude=1pt}]

\colorlet{auchblau}{blue!60!white}
\colorlet{hellblau}{blue!20!white}
\colorlet{hellrot}{red!40!white}
\colorlet{hellgrau}{black!30!white}
\colorlet{dunkelgrau}{black!60!white}

\colorlet{grau}{black!50!white}

\newtheorem{definition}{Definition}
\newtheorem{proposition}[definition]{Proposition}
\newtheorem{theorem}[definition]{Theorem}

\newtheorem{lemma}[definition]{Lemma}

\newtheorem{problem}[definition]{Problem}

\newcommand{\bigO}{O}

\newcommand{\cP}{\mathcal{P}}
\newcommand{\cQ}{\mathcal{Q}}
\newcommand{\cS}{\mathcal{S}}

\newcommand{\cR}{\mathcal{R}}

\newcommand{\comment}[1]{}
\newcommand{\N}{\mathbb N}
\newcommand{\R}{\mathbb R}

\newcommand{\emtext}[1]{\text{\em #1}}

%


\newcommand{\sm}{\setminus}

\newcommand{\thofun}{h_{\ref{thoprop}}}
\newcommand{\bfun}{h_{\ref{linkagewallminor}}}
\newcommand{\rsfun}{h_{\ref{robseywalltangle}}}
\newcommand{\fptfun}{h_{\ref{oddflatwallthm}}}
\newcommand{\pfun}{h_{\ref{bipblockcase}}}
\newcommand{\plfun}{h_{\ref{pathlinkagetotop}}}

\title{Packing $A$-Paths of Length Zero Modulo Four}
\author{Henning Bruhn and Arthur Ulmer\thanks{supported by DFG, grant no.\  BR 5449/1-1}}
\date{}

\begin{document}
\maketitle

\begin{abstract}
We show that $A$-paths of length $0$ modulo $4$
have the Erd\H{o}s-P\'osa property. 
Modulus~$m=4$ is the only composite number for which  $A$-paths of length~$0$ modulo~$m$ have the property. 
\end{abstract}

\section{Introduction}
Cycles obey a packing-covering duality, as Erd\H os and P\'osa proved 
in their now classic 1965 paper~\cite{EP65}: every graph contains $k$
disjoint cycles, or a vertex set of size $\bigO(k\log k)$
that meets every cycle. Gallai's~\cite{Gal61} earlier result about $A$-paths
can be phrased in similar terms: for every vertex set $A$, every graph 
contains $k$ disjoint $A$-paths or a vertex set of size at most $2k-2$
that meets every $A$-path. (An \emph{$A$-path} is a path 
that starts in $A$, ends in a different vertex of $A$, and has no intermediate vertex in $A$.)

More succinctly, we might say that cycles, as well as $A$-paths have 
the \emph{Erd\H{o}s-P\'osa property}. Here,  
 a class of graphs\footnote{Or graphs with some extra structure, such 
as $A$-paths.} has the Erd\H{o}s-P\'osa property if 
there is a function $f: \N \to \R_+$ such that in every graph there are either $k$ 
disjoint subgraphs 
belonging to the class  or a vertex set $X$ of size $|X| \leq f(k)$ 
that intersects all subgraphs that lie in the class. 

A fairly general class  with the Erd\H os-P\'osa property is 
the class of graphs that have a fixed planar graph as minor~\cite{RS86}. Also special
types of $A$-paths have it. Indeed, 
Wollan~\cite{Wol10} proved that for any $m$, the class of $A$-paths of length $\neq 0$ 
modulo $m$ has the property.\footnote{Actually, Wollan's result is more comprehensive; 
see Section~\ref{bipsec}.} This includes, in particular, odd $A$-paths. 

So,
what about $A$-paths of length \emph{equal} to $0$ modulo $m$? Bruhn, Heinlein and
Joos~\cite{BHJ18b} found that even $A$-paths, i.e.\ when $m=2$,
 have the Erd\H{o}s-P\'osa property, and constructed
counterexamples that show that the property is lost for composite numbers $m > 4$. 
In particular, the only composite number $m$ for which it is open whether the 
property holds or not is $m=4$.

When it comes to the Erd\H os-P\'osa property,
there is seemingly a common phenomenon:
 counterexamples are easy to find, 
and normally if none is found then the Erd\H{o}s-P\'osa property holds.
This is also the case here. We prove:
 
\begin{theorem}\label{zeroepp}
$A$-paths of length $0$ modulo $4$ have the Erd\H{o}s-P\'osa property.
\end{theorem}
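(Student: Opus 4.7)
The plan is to combine the Erd\H os--P\'osa property for even $A$-paths (the $m=2$ result of Bruhn, Heinlein and Joos~\cite{BHJ18b}) with the flat-wall machinery for parity-control. First, apply that theorem with a parameter $N\gg k$: this either gives a bounded-size hitting set $X$ meeting every even $A$-path, which is at once a hitting set for our target $A$-paths of length $\equiv 0 \pmod 4$, or a family $P_1,\ldots,P_N$ of pairwise disjoint even $A$-paths. In the latter case we may assume, passing to a subfamily, that almost all $P_i$ have length $\equiv 2 \pmod 4$; otherwise $k$ of the $P_i$ already have length $\equiv 0\pmod 4$ and we are done.

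The remaining task is to modify many of these $\equiv 2 \pmod 4$ paths into $\equiv 0 \pmod 4$ $A$-paths, pairwise disjointly. The key local move is a \emph{parity-changing detour}: a path $Q$ between two vertices $u,v$ on some $P_i$, internally disjoint from $A$ and from the other $P_j$, such that the length of $Q$ differs from the length of the $uv$-subpath of $P_i$ by exactly $2 \pmod 4$. Rerouting $P_i$ through $Q$ yields an $A$-path of length $\equiv 0 \pmod 4$, and $k$ such disjoint reroutings finish the proof.

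To produce such detours in bulk I would invoke the Robertson--Seymour wall--tangle dichotomy (the primitive behind \emph{robseywalltangle}): either $G$ has bounded treewidth, in which case a direct dynamic-programming argument on the tree decomposition, tracking distances modulo $4$ across separators through the four residue classes, delivers Erd\H os--P\'osa; or $G$ contains a large wall minor. In the wall case one would apply an odd-flat-wall theorem (\emph{oddflatwallthm}) to obtain a large flat wall whose interior is parity-rich, and then use the linkage lemmas \emph{linkagewallminor} and \emph{pathlinkagetotop} to route many disjoint parity-changing detours from the wall to distinct paths $P_i$.

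The hard part will be the case in which the flat wall's interior is essentially bipartite: within a bipartite block, every $uv$-path has the same parity modulo $2$, so parity-changing detours of the required form do not exist locally. This is presumably where the \emph{bipblockcase} enters and constitutes the technical core of the proof. Handling it likely requires either reducing the mod-$4$ problem on the bipartite block to a mod-$2$ problem on a suitable double cover (so that even $A$-paths there can be re-invoked) or exploiting the rigidity of the bipartition to extract a small hitting set directly. This step is also where modulus $4$ presumably becomes special among composite moduli, matching the known counterexamples for composite $m>4$.
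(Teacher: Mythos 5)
There is a genuine gap, and it sits right at the start of your argument. From the $m=2$ theorem you obtain either a hitting set for even $A$-paths (fine, that also hits zero-mod-$4$ paths) or $N$ disjoint even $A$-paths, almost all of length $\equiv 2\pmod 4$. But the latter outcome carries no obligation whatsoever: the graph could consist of $N$ disjoint paths of length $2$ between vertices of $A$, in which case there are no zero $A$-paths at all and the empty set is a hitting set. Nothing forces your ``parity-changing detours'' to exist, so you cannot argue forward from the family $P_1,\ldots,P_N$; the correct conclusion in that branch may simply be a small hitting set, and your setup gives you no way to decide which. The paper instead starts from a minimal counterexample (no $k$ disjoint zero $A$-paths \emph{and} no hitting set of size $f(k)$) and uses this double failure to build a tangle $\eptan$ in $G-A$ in which the big side of every small separation contains a zero $A$-path (Lemma~\ref{largetanglem}). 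This tangle is the engine of the whole proof: it guarantees that the wall extracted via Theorem~\ref{robseywalltangle} is linked to $A$ by many disjoint paths (Lemma~\ref{linklem2}), and it rules out outcome~(c) of Theorem~\ref{oddflatwallthm} being separated from the action. Without it, the wall you find by a treewidth dichotomy may be irrelevant --- cut off from $A$ by a tiny separator.

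Your local move is also the wrong primitive. A single detour whose length differs from the bypassed segment by exactly $2\pmod 4$ is precisely what odd structure (odd bricks, odd linkages, odd clique models) cannot supply: rerouting around an odd cycle changes the length by an \emph{odd} amount. The paper's mechanism in cases~(a) and~(b.i) is the windmill (Lemma~\ref{clawlem}): three $A$-paths meeting at a hub, each carrying an odd cycle, so that each arm offers two lengths of different parity and a pigeonhole argument over $\mathbb Z_4$ across the three arms produces a zero $A$-path. In case~(b.ii) the shift by $2\pmod 4$ comes from concatenating \emph{two} odd-length linkage paths of equal residue. Finally, the bipartite case is not a technical afterthought you can wave at: the paper resolves it (Lemmas~\ref{bipcase} and~\ref{bipblockcase}) by an explicit reduction to Wollan's theorem on non-zero $A$-paths, via the $\mathbb Z_4$-labelling that assigns $0$ to edges at $A$ and $1$ elsewhere, under which zero-mod-$4$ $A$-paths in a bipartite graph become exactly the non-zero paths. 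Your double-cover suggestion does not obviously achieve this, and as written the proposal does not constitute a proof.
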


Our proof relies on two components. First, we use that if the 
theorem fails in a graph, then that graph admits a large tangle that 
always points to where the desired $A$-paths are found in the graph.
A similar approach has also been used by others. 
Second, we use a  powerful structural result by Huynh, Joos and Wollan~\cite{HJW16}
that gives insight on where, with respect to the tangle, the odd cycles 
of the graph are located.

\medskip

There is a rather rich literature on the Erd\H os-P\'osa property. 
In particular, a number of classes of cycles and paths with additional 
restrictions on the lengths are known to have the 
property.
These include: $A$-paths with a fixed minimum length~\cite{BHJ18b}, 
cycles of length $0$ modulo $m$~\cite{Tho88}, and also cycles of length 
$\neq 0$ modulo $m$ for odd $m$~\cite{Wol11}. 

Kriesell~\cite{Kri05} proved a directed analogue of Gallai's $A$-path theorem, 
which means that (directed) $A$-paths in digraphs have the Erd\H os-P\'osa property.
The same holds for directed cycles~\cite{RRST96}, as well as directed cycles
of a fixed minimum length~\cite{KK14}.
Many more results may be found in the survey of Raymond and Thilikos~\cite{RT16},
and in~\cite{BHJ18b}.

\section{Walls, tangles and minors}
We need a number of tools from the graph minors project of Robertson and Seymour
that we define in this section. For general graph-theoretic notation
we refer to Diestel~\cite{diestelBook17}.

Denote by $[n]$ the set $\{1,..,n\}$. We define an \emph{$n \times m$ grid} as the graph on the vertex set $[n] \times [m]$ with edges $(i,j)(k,l)$ if and only if $|i-k|+|j-l|=1$. An \emph{elementary $n$-wall} is a subgraph of an $(n+1) \times (2n+2)$ grid where we delete all edges $(2i-1,2j)(2i,2j)$ for $i \in [\lceil \frac{n}{2} \rceil]$ and $j \in [n+1]$ and all edges $(2i,2j-1)(2i+1,2j-1)$ for all $i \in [\lfloor \frac{n-1}{2} \rfloor]$ and $j \in [n+1]$ and afterwards deleting all vertices of degree one. Figure~\ref{fig:6-wall} shows a drawing of the wall.

 There is a unique collection of $n+1$ disjoint paths from vertices $(1,i)$ to vertices $(n+1,j)$ (where $i,j \in [2n+1]$ if $n$ is odd and $j\in [2n+2] \sm \{1\}$ if $n$ is even); these are the \emph{vertical paths}. Let $P_1$ be the vertical path containing $(1,1)$ and $P_2$ the one containing $(1,2n+1)$. There is again a unique collection of $n+1$ disjoint paths from $P_1$ to $P_2$; these are the \emph{horizontal paths}.
We can order the horizontal paths from top to bottom and the vertical paths from left to right. 
We say the \emph{first/second/\dots/last horizontal/vertical path} for 
the path that is the first/second/\dots/last in this order. The first horizontal path is the \emph{top row}.

The \emph{nails} of an elementary wall are the interior vertices of the top row of degree $2$ in the wall,
and the \emph{outer cycle} is the union of the first and last horizontal path and the first and last vertical path. 
A \emph{brick} is any cycle of length six in an elementary wall.

\begin{figure}[ht]
\centering
\begin{tikzpicture}
\tikzstyle{hvertex}=[thick,circle,inner sep=0.cm, minimum size=1.6mm, fill=white, draw=black]
\tikzstyle{marked}=[line width=3pt,color=dunkelgrau]
\tikzstyle{point}=[thin,->,shorten >=2pt,color=dunkelgrau]

\def\wallheight{8}
\def\brickheight{0.5}

\pgfmathtruncatemacro{\lastrow}{\wallheight}
\pgfmathtruncatemacro{\penultimaterow}{\wallheight-1}
\pgfmathtruncatemacro{\lastrowshift}{mod(\wallheight,2)}
\pgfmathtruncatemacro{\lastx}{2*\wallheight+1}

\draw[hedge] (\brickheight,0) -- (2*\wallheight*\brickheight+\brickheight,0);
\foreach \i in {1,...,\penultimaterow}{
  \draw[hedge] (0,\i*\brickheight) -- (2*\wallheight*\brickheight+\brickheight,\i*\brickheight);
}
\draw[hedge] (\lastrowshift*\brickheight,\lastrow*\brickheight) to ++(2*\wallheight*\brickheight,0);

\foreach \j in {0,2,...,\penultimaterow}{
  \foreach \i in {0,...,\wallheight}{
    \draw[hedge] (2*\i*\brickheight+\brickheight,\j*\brickheight) to ++(0,\brickheight);
  }
}
\foreach \j in {1,3,...,\penultimaterow}{
  \foreach \i in {0,...,\wallheight}{
    \draw[hedge] (2*\i*\brickheight,\j*\brickheight) to ++(0,\brickheight);
  }
}

\def\colind{5}
\foreach \j in {2,4,6}{
  \draw[marked] (\colind*\brickheight,\j*\brickheight-2*\brickheight) -- ++ (0,\brickheight) -- ++(-\brickheight,0) -- ++(0,\brickheight) -- ++(\brickheight,0);
}
\draw[marked] (\colind*\brickheight,6*\brickheight) -- ++ (0,\brickheight) -- ++(-\brickheight,0) -- ++(0,\brickheight);

\def\rowind{4}
\foreach \i in {1,...,\lastx}{
  \draw[marked] (\i*\brickheight-\brickheight,\rowind*\brickheight) -- ++(\brickheight,0);
}

\draw[marked] (2*\wallheight*\brickheight,1*\brickheight) -- ++(0,\brickheight) coordinate[midway] (brx)
-- ++(-2*\brickheight,0)
-- ++(0,-\brickheight) -- ++(2*\brickheight,0);

\foreach \i in {1,...,\lastx}{
  \node[hvertex] (w\i w0) at (\i*\brickheight,0){};
}
\foreach \j in {1,...,\penultimaterow}{
  \foreach \i in {0,...,\lastx}{
    \node[hvertex] (w\i w\j) at (\i*\brickheight,\j*\brickheight){};
  }
}
\foreach \i in {1,...,\lastx}{
  \node[hvertex] (w\i w\lastrow) at (\i*\brickheight+\lastrowshift*\brickheight-\brickheight,\lastrow*\brickheight){};
}

\foreach \i in {2,4,...,\lastx}{
  \node[hvertex,fill=white] (w\i w\lastrow) at (\i*\brickheight+\lastrowshift*\brickheight-\brickheight,\lastrow*\brickheight){};
}

\node[anchor=mid] (tr) at (\lastx*\brickheight+0.5,\wallheight*\brickheight+0.8){top row};
\draw[point,out=270,in=0] (tr) to (w\lastx w\wallheight);

\node[anchor=mid] (nails) at (\lastx*\brickheight-1,\wallheight*\brickheight+0.8){nails};
\draw[point,out=180,in=90] (nails) to (w10w\wallheight);
\draw[point,out=190,in=90] (nails) to (w12w\wallheight);
\draw[point,out=200,in=90] (nails) to (w14w\wallheight);

\node[anchor=mid] (vp) at (0,\wallheight*\brickheight+0.8){vertical path};
\draw[point,out=0,in=90] (vp) to (w\colind w\wallheight);

\node[align=center] (hp) at (\lastx*\brickheight+1.2,\rowind*\brickheight+0.8){horizontal\\ path};
\draw[point,out=270,in=0] (hp) to (w\lastx w\rowind);

\node[align=center] (br) at (\lastx*\brickheight+1.2,1*\brickheight+0.8){brick};
\draw[point,out=270,in=0] (br) to (brx);

\end{tikzpicture}
\caption{An elementary $8$-wall}\label{fig:6-wall}
\end{figure}
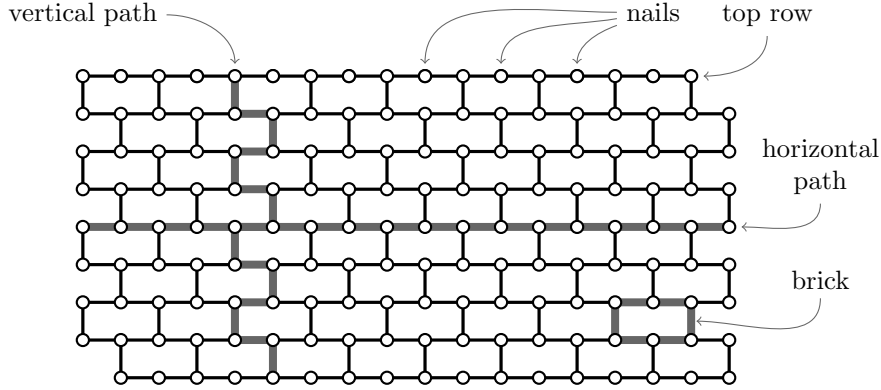

A \emph{wall of size~$n$}, or an \emph{$n$-wall}, is a subdivision of an elementary $n$-wall. 
All definitions above can be extended to subdivisions of walls by replacing each vertex of the elementary wall with its branch vertex in the subdivision. However, since nails are vertices of degree~$2$, 
there are usually multiple ways to choose the branch vertex of a nail. 

Let $W'$ be a wall that is contained in a wall $W$.  We say $W'$ is a \emph{subwall} of $W$ if each horizontal path of $W'$ is a subpath of a horizontal path of $W$ and the same is true 
for the vertical paths. Additionally, we require that whenever $W'$ contains a subpath of the $i$th and of the $j$th horizontal path of $W$ 
then it also contains a subpath of the $\ell$th horizontal path for all $i<\ell<j$, and 
 the same holds for vertical paths.

A subwall $W'$ is \emph{$k$-contained} in $W$ if it is disjoint of the first $k$ and the last $k$ horizontal and vertical paths of $W$. For a subwall $W'$ that is at least $1$-contained in a wall $W$ there is a natural choice of nails:
those vertices in the top row of $W'$ that are branch vertices in $W$. 
Whenever we have an at least $1$-contained subwall of $W$ we will always assume the nails to be chosen in this way.

A \emph{separation} in a graph $G$ is a pair $(C,D)$ such that $C$ and $D$ are edge-disjoint subgraphs of $G$ and $C \cup D = G$. We define its \emph{order} as the cardinality of $V(C \cap D)$. A \emph{tangle of order $n$} is a set $\mathcal{T}$ of separations (in $G$) of order $\leq n-1$ with the following properties:
\begin{enumerate}[(T1)]
\item\label{tangle1} For every separation $(C, D)$ of order $\leq n-1$ either $(C, D) \in \mathcal{T}$ or \mbox{$(D, C) \in \mathcal{T}$} but not both
\item\label{tangle2} $V(C) \neq V(G)$ for all $(C, D) \in \mathcal{T}$
\item\label{tangle3} $C_1 \cup C_2 \cup C_3 \neq G$ for all $(C_1, D_1), (C_2, D_2), (C_3, D_3) \in \mathcal{T}$
\end{enumerate}

In the following we will not require the two subgraphs $C$ and $D$ to be edge-disjoint to form a separation. This is because the way our tangles are defined the big side only depends on the position of some vertices (not edges).

When a graph $G$ has a graph $H$ as a minor, then it contains an \emph{$H$-model}, that is, there 
is a mapping $\pi$ from $V(H)\cup E(H)$ into $G$ such that
\begin{itemize}
\item $\pi(v)$ is a tree in $G$ (the \emph{branch set} of $v$) 
for each $v\in V(H)$, and two such trees $\pi(v)$, $\pi(u)$
for distinct $u,v\in V(H)$ are disjoint; and
\item $\pi(uv)$ is an edge of $G$ between $\pi(u)$ and $\pi(v)$ for each $uv\in E(H)$.
\end{itemize}

A $K_t$-model $\pi$ is an \emph{odd $K_t$-model} if
the unique cycle in $\pi(u)\cup\pi(v)\cup\pi(w)\cup\{\pi(uv),\pi(vw),\pi(wu)\}$
is odd for all distinct $u,v,w\in V(K_t)$.

If $G$ contains an $H$-model $\pi$ then every tangle $\mathcal T$ of $H$ of order $n$ induces 
a tangle $\mathcal T_\pi$ in $G$ of the same order. Indeed, let $(C, D)$ be a separation in $G$ of order $\leq n-1$,
and let $C_H, D_H \subseteq H$ be the induced subgraphs of $H$ obtained by 
putting all vertices of $H$ whose branch sets under $\pi$ 
are intersecting $C$ into  $C_H$ and all vertices whose branch sets are intersecting $D$ into $D_H$. 
Then, $C_H \cup D_H =H$, and $(C_H,D_H)$ is a separation in $H$ of order at most $n-1$. 
Therefore, either $(C_H, D_H) \in \mathcal{T}$ or $(D_H, C_H) \in \mathcal{T}$, 
and we then put $(C, D)$ resp.\ $(D, C)$ into $\mathcal{T}_{\pi}$. This defines a tangle~\cite{RS91}.

We describe two elementary tangles that we will need. 
First, we define a tangle of order $n = \lceil \frac{2t}{3} \rceil$ in a complete graph $K_t$. For any separation $(C,D)$ of order at most $n-1$ in a complete graph there is one side, $D$ say, which contains all of $V(K_t)$. Putting all such $(C,D)$ into a set $\mathcal{T}$ defines a 
tangle~\cite{RS91}. 

Next, consider a wall $W$ of size $n$ in a graph $G$. 
In any separation $(C,D)$ of $G$ of order $\leq n-1$ there has to be exactly one side, $C$ or $D$, 
where a complete horizontal path of the wall lies. Then, there is a tangle $\mathcal T_W$
of order $n$ defined as follows: whenever $(C,D)$ is a separation of $G$ of order less than $n$, 
such that $D$ contains a complete horizontal path of $W$, put $(C,D)$ into $\mathcal T_W$.
Again, this defines a tangle~\cite{RS91}.

Let $\mathcal{T}$ be a tangle of order $n$. Let $m \leq n$ be some positive integer,
 and let $\mathcal{T}_0$ be the subset of $\mathcal{T}$ that consists of all separations in $\mathcal T$
 of order at most $m-1$. 
Then $\mathcal{T}_0$ is a tangle of order $m$, the \emph{truncation} of $\mathcal{T}$.
We need an elementary lemma about the truncations of a wall tangle. For a proof see for instance~\cite{HJW16}.
\begin{lemma}\label{subwalltangle}
If $W_0$ is a subwall of a wall $W$, then $\mathcal{T}_{W_0}$ is a truncation of $\mathcal{T}_W$.
\end{lemma}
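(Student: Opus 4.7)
The plan is to prove that for every separation $(C,D)$ of order at most $m-1$ (where $m$ is the size of $W_0$), one has $(C,D) \in \mathcal{T}_W$ if and only if $(C,D) \in \mathcal{T}_{W_0}$. Since the truncation of $\mathcal{T}_W$ to order $m$ is by definition the set of its members of order at most $m-1$, the lemma follows immediately.

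First I would recall the dichotomy behind the wall tangle: in any separation $(C,D)$ of $G$ of order less than $n$, exactly one side contains a complete horizontal path of $W$, and likewise for $W_0$ at order less than $m$. Writing $X = V(C) \cap V(D)$, existence comes from the fact that $|X| \leq n-1$ leaves at least two of the $n+1$ disjoint horizontal paths of $W$ disjoint from $X$, and such an intact horizontal path, being connected and disjoint from $X$, lies entirely in $V(C) \setminus V(D)$ or entirely in $V(D) \setminus V(C)$. Uniqueness follows because if two horizontal paths lay on opposite sides, every one of the $n+1$ disjoint vertical paths of $W$ would meet both sides and hence cross $X$, which is impossible for $|X| \leq n-1$.

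The main step is to show that, for a separation of order at most $m-1$, the unique side containing a complete horizontal path of $W$ coincides with the unique side containing a complete horizontal path of $W_0$. I would argue by contradiction: suppose $D$ contains a complete horizontal path $P$ of $W$ while $C$ contains a complete horizontal path $Q_0$ of $W_0$. By definition of subwall, the $m+1$ disjoint vertical paths of $W_0$ are subpaths of $m+1$ distinct vertical paths $V_1, \dots, V_{m+1}$ of $W$ (distinctness because the $V_i$ are pairwise disjoint). Each $V_i$ meets $Q_0$ in a vertex $v_i \in V(C)$ and, running from the top row to the bottom row of $W$, also meets $P$ in a vertex $p_i \in V(D)$. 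Hence the $V_i$-subpath from $v_i$ to $p_i$ must contain a vertex of $X$, and since the $V_i$ are pairwise disjoint, this produces $m+1$ distinct vertices of $X$, contradicting $|X| \leq m-1$.

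The only real subtlety is verifying the two structural inputs used above: that the vertical paths of $W_0$ lie inside $m+1$ distinct vertical paths of $W$, and that every vertical path of $W$ crosses every horizontal path of $W$. Both are immediate from the definitions of walls and subwalls, so I do not anticipate a serious obstacle beyond bookkeeping.
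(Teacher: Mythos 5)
Your proof is correct. Note that the paper does not actually prove this lemma; it cites Huynh, Joos and Wollan \cite{HJW16} for it, and the argument you give is exactly the standard one: since the separation has order at most $m-1$ and the $m+1$ pairwise disjoint vertical paths of $W$ that contain the vertical paths of $W_0$ would each have to cross $V(C)\cap V(D)$ if the $W$-side and the $W_0$-side disagreed, the two wall tangles must point the same way on every separation of order at most $m-1$, which is precisely the definition of truncation. The only imprecise spot is your parenthetical justification that the $m+1$ vertical paths of $W_0$ lie in \emph{distinct} vertical paths of $W$: disjointness of the $W_0$-paths alone does not give this (two disjoint paths can be subpaths of one path); one should instead observe that two disjoint subpaths of a single vertical path $V$ of $W$ cannot both meet the (connected, consecutive) segments $V\cap H'$ for two distinct horizontal paths $H'$ of $W$, as each would then have to contain the portion of $V$ between those segments. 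This is the bookkeeping you flagged, and it goes through; alternatively, even a loss of a constant number of the $V_i$ would not hurt the count, since $m+1$ exceeds $m-1$ by two.
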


In any tangle $\mathcal{T}$ of order $n$ and set $X \subseteq V(G)$ of size at most 
$n-2$ there is a unique block $U$ of $G - X$, the \emph{$\mathcal{T}$-large block} of $G - X$,  
such that $X \cup V(U)$ never lies in $C$ if $(C,D) \in \mathcal{T}$; see for instance \cite{HJW16}.

{
Over the course of this article, we will need to use a number of functions that 
usually force some structure in a graph. We write these functions as $h_i$, where
$i$ is the number of the theorem the function occurs in. The first example is 
function $\rsfun$ in the next result.
}

\begin{theorem}[Robertson, Seymour and Thomas \cite{quicklyExcluding}]\label{robseywalltangle} 
For every positive integer $t$ there is an integer $\rsfun(t)$ such that in any graph with a tangle $\mathcal{T}$ of order $\rsfun(t)$  there is a $t$-wall $W$ such that $\mathcal{T}_W$ is a truncation of $\mathcal{T}$.
\end{theorem}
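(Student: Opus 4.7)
The plan is to combine two ingredients: a grid-minor/wall-extraction result to find a large wall $W$ in $G$ from the tangle $\mathcal{T}$, and an alignment argument using Lemma~\ref{subwalltangle} to ensure the wall's tangle agrees with $\mathcal{T}$.

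First, I would invoke a standard excluded-wall theorem: choose $\rsfun(t)$ large enough that any graph carrying a tangle of order $\rsfun(t)$ contains an elementary wall $W$ of some size $s$ much larger than $t$. This yields a large wall, but a priori its wall tangle $\mathcal{T}_W$ need not be a truncation of $\mathcal{T}$.

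Second, I would align the wall with $\mathcal{T}$ by passing to a subwall. If the truncation of $\mathcal{T}_W$ to order $t$ is already contained in $\mathcal{T}$, then any $t$-subwall $W^* \subseteq W$ works, since Lemma~\ref{subwalltangle} gives $\mathcal{T}_{W^*}$ as a truncation of $\mathcal{T}_W$ and hence of $\mathcal{T}$. Otherwise, there is a separation $(C,D)$ of order at most $t-1$ with $(C,D) \in \mathcal{T}_W$ but $(D,C) \in \mathcal{T}$. Then some complete horizontal path of $W$ lies in $D$, while the $\mathcal{T}$-large block of $G - V(C \cap D)$ sits on the $C$-side. Since $|V(C \cap D)| \leq t-1$ and $W$ is huge compared to $t$, the separator cuts off only a bounded region of $W$; there is still room to pick a $t$-subwall $W'$ lying entirely inside the $\mathcal{T}$-large block, and we iterate with $W'$ in place of $W$.

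The main obstacle is quantitative: packaging this iteration so as to pin down the function $\rsfun(t)$. The cleanest resolution is to make $W$ so large at the outset that a single sufficiently deep $t$-subwall $W^*$ simultaneously avoids the separator of every offending separation of order at most $t-1$; then $\mathcal{T}_{W^*}$ is automatically a truncation of $\mathcal{T}$. Establishing that such a simultaneous choice exists, with an explicit bound on how much larger than $t$ the ambient wall must be, is the technical heart of the Robertson--Seymour--Thomas argument, and once it is in place the theorem follows.
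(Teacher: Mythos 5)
This theorem is not proved in the paper at all --- it is imported verbatim from Robertson, Seymour and Thomas --- so the only question is whether your sketch would actually work. It would not: the alignment step in your second stage fails. Suppose $(C,D)\in\mathcal{T}_W$ but $(D,C)\in\mathcal{T}$. With the paper's conventions, $D$ is the side containing a complete horizontal path of $W$, and since $|V(C\cap D)|\leq t-1$ is tiny compared to the size of $W$, in fact all but at most $t-1$ of the horizontal and vertical paths of $W$ lie entirely in $D$; essentially the whole wall sits in $D-C$. Meanwhile the $\mathcal{T}$-large block of $G-V(C\cap D)$ sits in $C-D$, which is disjoint from $D-C$. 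So there is no room at all to pick a $t$-subwall of $W$ inside the $\mathcal{T}$-large block --- every subwall of $W$ still has its tangle pointing to $D$, and Lemma~\ref{subwalltangle} guarantees this formally. The proposed iteration therefore never makes progress, and the ``simultaneous avoidance'' variant fares no better: avoiding the separators of the offending separations (of which there may be very many) does not change which side of those separations the subwall lies on.

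The failure is not just quantitative but structural: post-hoc alignment of an arbitrarily found wall cannot work. A graph may contain two huge, disjoint walls $W_1$ and $W_2$ whose tangles are incomparable; if $\mathcal{T}$ is (a truncation of) $\mathcal{T}_{W_1}$ and your excluded-wall theorem happens to return $W_2$, no subwall of $W_2$ will ever induce a truncation of $\mathcal{T}$. The missing idea is that the wall must be \emph{located by the tangle from the outset}: the Robertson--Seymour--Thomas argument runs the grid/wall-extraction machinery relative to the given tangle (equivalently, relative to the haven or bramble it defines), so that every separation encountered during the construction is resolved the way $\mathcal{T}$ dictates and the resulting wall is automatically controlled by $\mathcal{T}$. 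That tangle-guided construction is the heart of the proof and is absent from your sketch.
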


A \emph{linkage of a wall $W$} with nails $N$ is a set of disjoint $N$-paths contained in $G-(W-N)$. 
The top row of $W$ induces an ordering $\leq$ on $N$ (in fact, it induces two --- we pick one). 
For a linkage path $P$, we write $r_P$ and $\ell_P$ for the endvertices of $P$ if $\ell_P<r_P$.
The linkage $\mathcal L$ is \emph{in series} if $r_P<\ell_Q$  for all distinct $P,Q\in\mathcal L$ with $\ell_P<\ell_Q$;
it is  \emph{crossing} if $\ell_P<\ell_Q<r_P<r_Q$ 
for all distinct $P,Q\in\mathcal L$ with $\ell_P<\ell_Q$; 
and $\mathcal L$ is \emph{nested} if $\ell_P<\ell_Q<r_Q<r_P$
for all distinct $P,Q\in\mathcal L$ with $\ell_P<\ell_Q$; see Figure~\ref{linkagefig}. 
The linkage is \emph{pure} if it is either crossing, nested or in series.

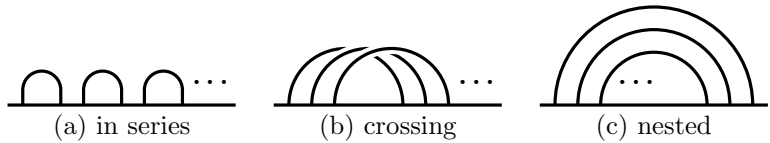
\begin{figure}[ht]
\centering
\begin{tikzpicture}

\tikzstyle{oben}=[line width=1.5pt,double distance=1.2pt,draw=white,double=black]

\begin{scope}
\def\radius{0.25}
\def\step{0.3}
\draw[hedge] (0,0) -- (3,0);
\draw[hedge] (0.2,0) -- ++(0,0.2) arc (180:0:\radius) -- ++(0,-0.2);
\draw[hedge] (0.2+2*\radius+\step,0) -- ++(0,0.2) arc (180:0:\radius) -- ++(0,-0.2);
\draw[hedge] (0.2+4*\radius+2*\step,0) -- ++(0,0.2) arc (180:0:\radius) -- ++(0,-0.2);
\node at (2.7,0.3) {{\bf \dots}};
\node at (1.5,-0.3) {(a) in series};
\end{scope}

\begin{scope}[shift={(3.5,0)}]
\draw[oben] (0.2,0) arc (180:0:0.75);
\draw[oben] (0.2+0.3,0) arc (180:0:0.75);
\draw[oben] (0.2+2*0.3,0) arc (180:0:0.75);
\draw[hedge] (0,0) -- (3,0);
\node at (2.7,0.3) {{\bf \dots}};
\node at (1.5,-0.3) {(b) crossing};
\end{scope}

\begin{scope}[shift={(7,0)}]
\draw[hedge] (0,0) -- (3,0);
\draw[hedge] (0.2,0) arc (180:0:1.3);
\draw[hedge] (0.2+0.3,0) arc (180:0:1.0);
\draw[hedge] (0.2+2*0.3,0) arc (180:0:0.7);
\node at (1.3,0.3) {{\bf \dots}};
\node at (1.5,-0.3) {(c) nested};
\end{scope}

\end{tikzpicture}
\caption{The three types of pure linkages}\label{linkagefig}
\end{figure}

An \emph{odd linkage} of a bipartite wall $W$ is a linkage of $W$ such that for every 
path $P$ in the linkage, every cycle in $P \cup W$ that passes through $P$ is odd, 
or equivalently, $P \cup W$ is not bipartite.

The principal tool for our proof is a powerful structural result by Huynh, Joos and Wollan~\cite{HJW16}.
We present here a simplified version that is adapted to our needs. 
The original formulation covers graphs in which edges are endowed with two 
\emph{directed} group-labellings.\footnote{Our version arises by working in the group $\mathbb Z_2$
and labelling every edge with~$1$.} Moreover, the conclusion of the theorem 
is stronger: in the original version the obtained wall $W_0$ is \emph{flat}, that is, 
embedded in an essentially planar part of the graph. We never use the flatness of $W_0$
and therefore omit it from the statement.

\begin{theorem}[Huynh, Joos and Wollan~\cite{HJW16}]\label{oddflatwallthm}
For every $t\in\mathbb N$, there exist an integer $\fptfun(t)$ with the
following property. 
Let $G$ be a graph that contains a $\fptfun(t)$-wall $W$. Then one of the following
statements holds.
\begin{enumerate}[\rm (a)]
\item There is an odd $K_t$-model $\pi$ in $G$ such 
that $\mathcal T_\pi$ is a truncation of $\mathcal T_W$.
\item There  is a  $100t$-wall
$W_0$ in $G$ such that $\mathcal T_{W_0}$ is a truncation of $\mathcal T_W$ such that
\begin{enumerate}[\rm (b.i)]
\item every brick of $W_0$ is an odd cycle; or
\item $W_0$ is bipartite, and there is a  pure odd linkage
$\mathcal P$ of $W_0$ of size $t$.
\end{enumerate}
\item There is a set $Z$ of vertices of $G$ such 
that $|Z|\leq \fptfun(t)$ and the $\mathcal T_W$-large block of $G-Z$ is bipartite.
\end{enumerate}
\end{theorem}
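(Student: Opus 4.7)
\medskip

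\noindent\textbf{Proof plan.} The plan is to start from the given $\fptfun(t)$-wall $W$ and either harvest an odd clique minor, a structurally restricted subwall, or a small bipartising set, depending on the parities seen near $W$. The overall dichotomy I would pursue is ``essentially bipartite'' versus ``abundantly odd'' relative to the tangle $\mathcal T_W$.

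First, pick a proper $2$-colouring of $W$ (as a subdivision of an elementary wall, $W$ is bipartite) and call a path $P$ in $G-E(W)$ with both endpoints on $W$ an \emph{odd jump} if $P$ together with a path of $W$ connecting its endpoints forms an odd closed walk; one checks that the parity is well-defined independently of the chosen connecting path, up to a global reference. I would then iteratively extract a transversal of pairwise disjoint odd jumps, and set $Z$ to be its endpoint set. If, after removing a bounded set $Z$, no odd jumps survive in the $\mathcal T_W$-large block of $G-Z$, then that block is bipartite and we obtain outcome~(c). Otherwise many genuinely odd jumps are available in a still-large subwall $W_1\subseteq W-Z$.

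In the abundantly odd regime, I would sort the odd jumps by the relative position of their endpoints on the vertical paths of $W_1$. A Ramsey-type extraction on the two endpoint orderings $\ell_P,r_P$ yields either many odd jumps in one of the three pure patterns---in series, crossing, or nested---giving a pure odd linkage of size $t$ on a subwall $W_0$ and hence outcome~(b.ii); or, when pure linkages keep failing to be odd because the parity flips happen inside the wall itself, a rerouting of horizontal and vertical paths of $W_1$ through the jumps produces a subwall $W_0$ in which every brick is an odd cycle, i.e.\ outcome~(b.i). Applying Lemma~\ref{subwalltangle} along the way keeps $\mathcal T_{W_0}$ a truncation of $\mathcal T_W$.

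The hardest step is the boundary case in which neither a pure odd linkage nor an all-odd-brick refinement can be extracted because the odd jumps are ``too mixed''. Here I would bundle many mutually crossing odd jumps together with disjoint paths through $W_1$ into the branch sets and edges of a $K_t$-model in $G$, and then upgrade it to oddness: the grid-like redundancy of $W_1$ lets one reroute any single model edge through a detour of prescribed parity, so that any one triangle $\pi(u)\cup\pi(v)\cup\pi(w)\cup\{\pi(uv),\pi(vw),\pi(wu)\}$ can be made odd. Certifying that all $\binom{t}{3}$ triangles can be made odd \emph{simultaneously}, while preserving disjointness of branch sets and edges, is the main obstacle, and is the step that dictates how large $\fptfun(t)$ must be chosen; this parity bookkeeping is essentially what the original proof in~\cite{HJW16} carries out in the more general language of group-labelled graphs.
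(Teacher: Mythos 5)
This theorem is not proved in the paper at all: it is quoted as a black box from Huynh, Joos and Wollan~\cite{HJW16} (in a simplified, $\mathbb Z_2$-specialised form), so there is no in-paper proof to compare your sketch against. Judged on its own terms, your proposal is a reasonable high-level outline of the kind of argument one would expect, but it is not a proof, and it contains one genuine error at the very start. You write that $W$, ``as a subdivision of an elementary wall, is bipartite.'' Subdivision does not preserve bipartiteness --- subdividing a single edge of an even cycle once produces an odd cycle --- and this is precisely why outcome~(b.i), a wall all of whose bricks are odd cycles, must appear in the statement. Your whole parity bookkeeping (the notion of ``odd jump'' being well-defined relative to a $2$-colouring of $W$) collapses without first either passing to a bipartite subwall or handling the odd-brick case separately; as written, the dichotomy you set up in the second paragraph does not get off the ground.

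Beyond that, two further steps are asserted rather than argued. First, ``iteratively extract a transversal of pairwise disjoint odd jumps and set $Z$ to be its endpoint set; if no odd jumps survive in the $\mathcal T_W$-large block then it is bipartite'' is itself an Erd\H{o}s--P\'osa-type statement for odd jumps relative to a tangle; there is no a priori reason a bounded number of disjoint odd jumps forces a bounded hitting set, and establishing this (together with the fact that the absence of odd jumps really does certify bipartiteness of the large block, not merely of the wall's neighbourhood) is a substantial part of the machinery in~\cite{HJW16}. Second, you explicitly concede that making all $\binom{t}{3}$ triangles of the $K_t$-model odd \emph{simultaneously} is ``the main obstacle'' and defer it to the original paper. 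Since that is the step that actually produces outcome~(a) and determines $\fptfun(t)$, the proposal leaves the core of the theorem unproved. In short: the skeleton is plausible, but the bipartiteness claim is false as stated and the two load-bearing steps are missing.
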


{For the proof of the main result, we will deal with the different outcomes of Theorem~\ref{oddflatwallthm}
separately.}

\section{Bipartite case}\label{bipsec}

{
For any graph $G$ and set $A\subseteq V(G)$,
we say that $X\subseteq V(G)$ is a \emph{hitting set}
if $G-X$ does not contain any $A$-path of length~$0$~modulo~$4$.}
The hitting {sets} we will construct later have a very large size. It is possible, though, 
to get a much smaller hitting set if we assume the graph $G$ to be bipartite.
In fact, it suffices that $G-A$ is bipartite. This is what we do in this section.

At the same time, we here lay some ground work for the 
main theorem, by  dealing with the last case of 
Theorem~\ref{oddflatwallthm}. For the proof of the bipartite case we 
need the theorem by Wollan on $A$-paths that we already mentioned in the introduction.

{An \emph{undirected $\Gamma$-labelling}} in a graph $G$ is an assignment $\gamma$ that assigns to every edge of $G$ a value from an {abelian} group $\Gamma$. 
{If $P$ is a path in $G$ then its \emph{weight} $\gamma(P)$ is 
defined as $\gamma(P)=\sum_{e\in E(P)}\gamma(e)$.
We say that $P$ is a \emph{zero} path \emph{with respect to $\gamma$} if $\gamma(P)=0$, and 
it is a \emph{non-zero} path if $\gamma(P)\neq 0$.
If we do not explicitly specify a group labelling, we assume implicitly a labelling of~$1$
on every edge in the group~$\mathbb Z_4$, which means that a path is a zero path
if and only if its length is~$0$ modulo~$4$.}

\begin{theorem}[Wollan~\cite{Wol10}]\label{paulapath}
For every graph $G$, every abelian group~$\Gamma$, every undirected $\Gamma$-labelling~$\gamma$,
every set $A\subseteq V(G)$ and every integer $k$, the graph $G$ contains $k$ disjoint
non-zero $A$-paths with respect to $\gamma$
or a set $X\subseteq V(G)$ of size $O(k^4)$ such that 
$G-X$ does not contain any non-zero $A$-path with respect to~$\gamma$.
\end{theorem}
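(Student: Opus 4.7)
The plan is to proceed by induction on $k$, with trivial base $k=1$ (either a non-zero $A$-path exists, or the empty set hits every such path). For the inductive step, the tool that replaces parity in this general-group setting is shift-invariance: for any $v\in V(G)\setminus A$ and $g\in\Gamma$, replacing $\gamma(e)$ by $\gamma(e)+g$ on every edge $e$ incident with $v$ leaves $\gamma(P)$ unchanged for each $A$-path $P$, because any such $P$ either misses $v$ or enters and leaves it exactly once so that the two shifts cancel. This freedom lets us pass to a normalized labelling in which, say, the edges of a fixed spanning tree all carry weight zero. I would also reduce to the case that $G$ is $2$-connected and every edge lies on some non-zero $A$-path by splitting at cut vertices and deleting irrelevant material; this only affects the constant.

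Next, fix a maximum collection $\mathcal{P}$ of pairwise disjoint non-zero $A$-paths and assume $|\mathcal{P}|<k$, since otherwise there is nothing to prove. The central step is to exhibit a single non-zero $A$-path $P$ of length $O(k^3)$. The idea is an exchange argument: if every non-zero $A$-path were very long, then along such a path a pigeonhole on the sequence of prefix weights would produce two indices whose weight difference matches a detour available through disjoint bridges off $\bigcup\mathcal{P}$, which can then be spliced in to yield an additional non-zero $A$-path disjoint from $\mathcal{P}$, contradicting its maximality. With a short $P$ in hand, apply the inductive hypothesis to $G-V(P)$: either it contains $k-1$ disjoint non-zero $A$-paths, which together with $P$ contradict the assumption, or a hitting set $X'$ of size $O((k-1)^4)$ exists, and then $X=V(P)\cup X'$ has size $O(k^3)+O((k-1)^4)=O(k^4)$, as required.

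The main obstacle is making the exchange rigorous: in a graph housing at most $k-1$ disjoint non-zero $A$-paths, one has to show that some non-zero $A$-path has length $O(k^3)$. This requires a careful analysis of the bridges of $\bigcup\mathcal{P}$ in $G$, classifying them by the group values of the detours they provide, and combining Menger's theorem (to supply disjoint connectors between the path and suitable bridges) with shift-invariance (to arrange the necessary weight cancellations so that the spliced path is genuinely non-zero). A fallback, should this direct route prove resistant, is to invoke Theorem~\ref{robseywalltangle}: a $2$-connected graph with no short non-zero $A$-path but with a tangle of large order contains a correspondingly large wall, and the standard horizontal–vertical routing of $A$ through such a wall, combined with shift-invariance on the wall's interior, yields $k$ disjoint non-zero $A$-paths outright, completing the case distinction.
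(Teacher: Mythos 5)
This statement is Wollan's theorem, which the paper invokes as a black box from~\cite{Wol10} and does not prove, so there is no in-paper argument to compare against; I can only assess your sketch on its own terms, and it has a fatal flaw at its foundation. Your ``shift-invariance'' tool is the one available for \emph{directed} group-labellings, not for the \emph{undirected} labellings of this theorem. If you add $g$ to every edge incident with a vertex $v\notin A$, an $A$-path passing through $v$ uses exactly two such edges and its weight changes by $+g+g=2g$, which is not zero unless $g$ has order at most~$2$ in $\Gamma$. (In the directed setting one edge is traversed ``into'' $v$ and one ``out of'' $v$, and the labelling flips sign with the traversal direction, so the contributions are $+g$ and $-g$ and genuinely cancel; that is exactly the distinction the authors themselves highlight in their Conclusion when explaining why the directed machinery of Huynh--Joos--Wollan does not transfer to path lengths modulo $m>2$.) Consequently your normalization to a spanning tree of weight zero is unavailable, and so is the ``shift-invariance on the wall's interior'' step in your fallback. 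This is not a cosmetic issue: the entire difficulty of Wollan's undirected theorem, as opposed to the earlier directed version of Chudnovsky et al., stems from the absence of this operation.

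Independently of that, your ``central step'' --- that a graph with fewer than $k$ disjoint non-zero $A$-paths must contain a non-zero $A$-path of length $O(k^3)$ --- is asserted rather than proved, and it is where essentially all of the content of the theorem lives. The pigeonhole on prefix weights does not work as stated: the number of distinct prefix weights along a path is bounded by $|\Gamma|$, not by any function of $k$, and $\Gamma$ may be infinite; and even when two prefixes have equal weight, splicing in a ``detour'' through a bridge changes the weight by an amount you cannot control without the (unavailable) shift operation. The inductive bookkeeping at the end ($\sum_{j\le k}O(j^3)=O(k^4)$) is fine, but it rests entirely on this unestablished lemma. As it stands the proposal is a plausible-sounding outline of the \emph{directed} argument transplanted into a setting where its key mechanism fails, so it does not constitute a proof of the stated theorem.
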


\begin{lemma}\label{bipcase}
Let $G$ be a graph, and let $A\subseteq V(G)$ be a set such that $G-A$ is bipartite.
Then, for every integer $k$ the graph $G$ either contains 
$k$ disjoint zero $A$-paths 
or there is a set $X\subseteq V(G)$ of size $O(k^4)$ such that $G-X$ does not 
contain any zero $A$-path.
\end{lemma}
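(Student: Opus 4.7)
The plan is to reduce the lemma to two applications of Wollan's theorem (Theorem~\ref{paulapath}), one for each of the two ``types'' of zero $A$-paths that are forced by the bipartite structure of $G-A$.

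Let $(B_1,B_2)$ be a bipartition of $G-A$.  Since every path in a bipartite graph between two vertices in the same part has even length, any $A$-path of length $\equiv 0\pmod 4$ in $G$ has its first and last internal vertex in the same part.  Call such a path a \emph{type-$i$ $A$-path} if both of these internal vertices lie in $B_i$.  Every zero $A$-path of $G$ is of type~$1$ or type~$2$.

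For each $i\in\{1,2\}$ I would construct a subgraph and a labelling that pick out exactly the type-$i$ zero $A$-paths.  Set
\[
G_i \;=\; G \;-\; \{av : a\in A,\, v\in B_{3-i}\} \;-\; \{ab : a,b\in A\}.
\]
In $G_i$ the only $A$-incident edges are $A$-to-$B_i$ edges, so every $A$-path of $G_i$ is a type-$i$ $A$-path of $G$; conversely every type-$i$ $A$-path of $G$ lives in $G_i$.  Define $\gamma_i:E(G_i)\to\mathbb Z_4$ by assigning $1$ to each edge of $G-A$ and $0$ to each remaining $A$-incident edge.  For an $A$-path $P$ of length $L$ in $G_i$, its two endpoint edges contribute $0$ and its $L-2$ middle edges each contribute $1$, so
\[
\gamma_i(P) \;\equiv\; L-2 \pmod 4.
\]
Since $L$ is even for any $A$-path of $G_i$, we have $\gamma_i(P)\neq 0$ if and only if $L\equiv 0\pmod 4$.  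Thus the non-zero $A$-paths of $(G_i,\gamma_i)$ are exactly the type-$i$ zero $A$-paths of $G$.

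Now apply Theorem~\ref{paulapath} to each $(G_i,\gamma_i,A,k)$.  Either some $G_i$ contains $k$ disjoint non-zero $A$-paths --- which are then $k$ disjoint zero $A$-paths of $G$, and we are done --- or for both $i$ we obtain a set $X_i\subseteq V(G_i)=V(G)$ of size $\bigO(k^4)$ meeting every non-zero $A$-path of $G_i$, hence every type-$i$ zero $A$-path of $G$.  In the latter case $X:=X_1\cup X_2$ has size $\bigO(k^4)$, and since every zero $A$-path of $G$ is of some type $i$, it is met by $X_i\subseteq X$.

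The only real subtlety is noticing that one cannot hope to realise ``$L\equiv 0\pmod 4$'' as ``non-zero'' via a single group-labelling on $G$: the condition ``$L\equiv 0\pmod 4$'' describes the complement $\{1,2,3\}$ of a non-subgroup in $\mathbb Z_4$, and an attempt at uniform labels produces a contradiction in the ``different-part'' (odd-length) case.  The bipartiteness of $G-A$ resolves this by ruling out the different-part case entirely once we commit to one type; after that, the labelling only has to count middle edges modulo~$4$, and $\gamma_i\equiv 1$ on $E(G-A)$ does the job.
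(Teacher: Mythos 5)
Your proof is correct and follows essentially the same route as the paper: both reduce to the situation where every relevant $A$-path has even length via a two-case split along the bipartition of $G-A$, then use the identical $\mathbb Z_4$-labelling ($0$ on $A$-incident edges, $1$ elsewhere) to turn ``length $\equiv 0 \pmod 4$'' into ``non-zero weight'' and invoke Theorem~\ref{paulapath} twice. The only difference is cosmetic: you delete the $A$--$B_{3-i}$ edges to form $G_i$, whereas the paper splits each $a\in A$ into two copies $a_1,a_2$ and then has to translate the hitting set back to $G$; your variant avoids that last translation step.
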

\begin{proof}
Let $V_1,V_2$ be a bipartition of $G-A$. 
Starting with $G$ we define a graph $G'$ by 
replacing each vertex $a\in A$ by two vertices 
$a_1,a_2$, where we make $a_i$ adjacent to $N(a)\cap V_{3-i}$ for $i=1,2$. 
Set $A_i=\{a_i:a\in A\}$, and observe that the  graph $G'$ is bipartite. 

Clearly, if  for one $i\in\{1,2\}$ we find
 $k$ disjoint zero $A_i$-paths in $G'-A_{3-i}$ then there are 
also $k$ disjoint zero $A$-paths in $G$. On the other hand, if for $i=1,2$ 
we find vertex sets $X_i$ such that every zero $A_i$-path
in $G'-A_{3-i}$ meets $X_i$ then {
\[
 \{a\in A:a_1\in X_1\text{ or }a_2\in X_2\} \cup (X_1\cup X_2)\sm (A_1\cup A_2)
\]}%
meets every zero $A$-path of $G$. 
(Here, we use that zero $A$-paths have even length.) 

Thus, when proving the statement of the lemma, 
we may assume that $G$ is bipartite and that 
$A$ is completey contained in one of the bipartition
classes of $G$. That means that every $A$-path has even length.

We define an undirected $\mathbb Z_4$-labelling on $G$ by setting $\gamma(e)=0$ if 
$e$ is an edge that is incident with a vertex in $A$, and by setting $\gamma(e)=1$
otherwise. Let $P$ be an $A$-path. Then 
\[
\gamma(P)= 0\cdot 2+1\cdot(|E(P)|-2) = |E(P)|-2,
\]
where we calculate in $\mathbb Z_4$.
Thus $\gamma(P)\in \{0,2\}$ as every $A$-path has even length. Moreover, if $\gamma(P)=2$ then 
$|E(P)|\equiv 0\text{ mod }4$, and if $\gamma(P)=0$ then $|E(P)|\equiv 2\text{ mod }4$. 
Thus the $A$-paths of length congruent to~$0$ modulo~$4$ are precisely the non-zero $A$-paths
with respect to the labelling~$\gamma$. An application of Theorem~\ref{paulapath} finishes the proof.
\end{proof}

Let $B$ be a block of a graph $G$ and let $b_1,..,b_l$ be the cutvertices of $G$ that are contained in $B$. 
A component of $G-(V(B)\sm\{b_1,.,,b_l\})-E(B)$ is a  \emph{$B$-bridge}.\footnote{Normally, the definition 
of a bridge is {a bit different;} see for instance Bondy and Murty~\cite[Chapter~10.4]{BM08}.}

\begin{lemma}\label{bipblockcase}
There is a function $\pfun$ such that:
If $A$ is a vertex set in a graph $G$, if 
 $B$ is a bipartite block of $G-A$,  if $B_1,\ldots, B_\ell$ are the $B$-bridges
in $G-A$, and 
if there does not exist any zero $A$-path whose interior is contained in one of the $B$-bridges $B_1,\ldots, B_\ell$
then, for  every integer $k$, the graph $G$ either contains $k$ disjoint zero $A$-paths
or a set $X\subseteq V(G)$ of size $|X|\leq\pfun(k)$ such that $G-X$ does not 
contain any zero $A$-path.
\end{lemma}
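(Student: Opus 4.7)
The plan is to reduce the statement to Lemma~\ref{bipcase} by modifying $G$ into an auxiliary graph $G'$ whose non-$A$ part $G'-A$ is bipartite, while preserving enough of the zero $A$-path structure to translate the conclusion back to $G$.

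First, I would analyse how a zero $A$-path $P$ in $G$ interacts with $B$. By the assumption, the interior of $P$ is not contained in any single $B$-bridge, so it meets a non-cutvertex of $B$ or an edge of $B$. Decomposing the interior of $P$ along the cutvertices $b_1,\dots,b_\ell$ of $B$, one sees that $P$ alternates between subpaths in $B$ and subpaths in individual $B$-bridges; each bridge-subpath has both endpoints in $V_i:=V(B)\cap V(B_i)$, except possibly the first and last, which may begin or end at a bridge-interior vertex adjacent to an $A$-endpoint of $P$.

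Second, for each $B$-bridge $B_i$, write $A_i\subseteq A$ for the $A$-vertices adjacent to $V(B_i)\sm V(B)$ and, for every pair $(u,v)\in (V_i\cup A_i)^2$ with $u\neq v$, record the set $L^i_{u,v}\subseteq \mathbb Z_4$ of residues achieved by $u$-$v$-paths in $G[V(B_i)\cup A_i]$ whose interior lies in $V(B_i)\sm V(B)$. The hypothesis of the lemma is exactly that $0\notin L^i_{a,a'}$ whenever $a,a'\in A_i$, so no ``bridge-internal'' contribution alone can close up to a zero $A$-path.

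Third, I would build $G'$ from $G$ by deleting all bridge interiors $V(B_i)\sm V(B)$ and inserting, for each bridge $B_i$, each relevant pair $(u,v)$, and each $\ell\in L^i_{u,v}$, a small bipartite gadget between $u$ and $v$ of length $\equiv \ell\pmod 4$, whose internal vertices are placed consistently in the bipartition of $B$; when the parity of $\ell$ conflicts with the bipartition of $B$ at $(u,v)$, I would introduce an auxiliary parity-shifting vertex in $A$ at one endpoint, which avoids disturbing the bipartiteness of $G'-A$ because these helpers live in $A$. Enough copies of each gadget are provided so that several simultaneous gadget-uses in $G'$ can be realised by vertex-disjoint paths through the bridge in $G$. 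The outcome is a graph $G'$ with $G'-A$ bipartite.

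Fourth, I would apply Lemma~\ref{bipcase} to $G'$ and $A$, obtaining either $k$ disjoint zero $A$-paths in $G'$ or a hitting set $X$ of size $O(k^4)$ in $G'$. In the first case, the gadget correspondence turns each such path into a zero $A$-path in $G$ of matching mod-$4$ length, and pre-allocating enough gadget copies ensures that the family stays pairwise vertex-disjoint. In the second case, replacing every gadget vertex of $X$ by the (bounded) set of bridge-interior vertices it came from yields a hitting set in $G$ of size still bounded by some $\pfun(k)$.

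The main obstacle is the construction and analysis of the gadget in step three, together with the accompanying disjointness and hitting-set correspondences in step four. Compared with the bipartite case of Lemma~\ref{bipcase}, the mod-$4$ setting is considerably more delicate: a single bridge may realise several residues of mismatched parity on the same attachment pair, and careful use of the parity-shifting helpers in $A$ is required to keep $G'-A$ bipartite. The assumption that no $B$-bridge hosts a zero $A$-path on its own is precisely what rules out the only genuinely new obstruction and makes the gadget-based reduction faithful.
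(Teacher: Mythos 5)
Your overall strategy---replace each $B$-bridge by a bounded gadget recording the residues modulo~$4$ of the traversals it offers, then invoke Lemma~\ref{bipcase}---is exactly the paper's strategy, but two steps of the execution do not go through as written. The first concerns faithfulness of the pullback from $G'$ to $G$. Since $B$ is a block of $G-A$, each $B$-bridge $B_i$ meets $B$ in exactly \emph{one} cutvertex $b_i$; so your $V_i$ is a singleton, every ``parity conflict'' is vacuous, and the parity-shifting helpers are never needed --- which is fortunate, because enlarging $A$ by helper vertices would create new $A$-paths in $G'$ with no counterpart in $G$. But precisely because all gadgets of $B_i$ attach at the single vertex $b_i$, a zero $A$-path of $G'$ can be formed by concatenating, at $b_i$, an $(a,b_i)$-gadget of residue $\ell$ with an $(a',b_i)$-gadget of residue $-\ell$. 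The hypothesis only forbids a zero $A$-path with interior inside $B_i$; it does not forbid an $a$--$b_i$ path of residue $\ell$ and an $a'$--$b_i$ path of residue $-\ell$ that \emph{intersect each other} inside $B_i$. Such a spurious zero $A$-path of $G'$ need not correspond to any zero $A$-path of $G$, so a packing of $k$ disjoint zero $A$-paths in $G'$ cannot be translated back and the dichotomy collapses. Pre-allocating gadget copies does not help: the copies are disjoint in $G'$, but their realisations inside $B_i$ need not be. The paper avoids this by replacing each bridge with one fixed tree whose leaves $t_0,\dots,t_3$ are wired so that \emph{no} leaf-to-leaf path, extended by the attaching edges to $A$, has length $0$ modulo~$4$; your gadget needs an analogous design property, which you neither state nor verify.

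The second problem is the hitting-set translation. A gadget vertex does not ``come from'' a bounded set of bridge-interior vertices: it abstracts an entire residue class of paths through $B_i$, and in general no bounded set of interior vertices of $B_i$ meets all of them. The correct (and bounded) move is to put the cutvertex $b_i$ into $X$ whenever the hitting set of $G'$ meets a gadget of $B_i$; this works because every zero $A$-path of $G$ that enters $B_i$ must also reach $B$, hence must pass through $b_i$. The same single-cutvertex observation is what makes the disjointness transfer legitimate: two disjoint zero $A$-paths of $G'$ cannot both use gadgets of the same bridge (they would share $b_i$), so their preimages cannot meet inside any bridge --- not the ``enough copies'' argument you give, which points in the wrong direction.
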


\begin{proof}
{
Let $\pfun$ be the size of the hitting set $X$ in Lemma~\ref{bipcase}.}
Clearly, we may assume $G-A$ to be connected. 
For every~$i$, let $b_i$ be the common vertex of $B$ and $B_i$. 

Starting from $G[A\cup B]$ we form a graph $G^*$ as follows. 
For every $B_i$ take a disjoint copy $T_i$ of the tree in Figure~\ref{replacetree}
on the left, and identify its root $r$ with $b_i$. 
Now, for every $a\in A$ make $a$ adjacent to 
the copy of $t_p$, $p\in\{0,1,2,3\}$, in $T_i$  if 
there is an $a$--$b_i$ path $Q_{a,i}^p$ contained in $G[B_i\cup\{a\}]$ 
that has length congruent to~$p$ modulo~$4$.
Observe that $G^*-A$ is bipartite.

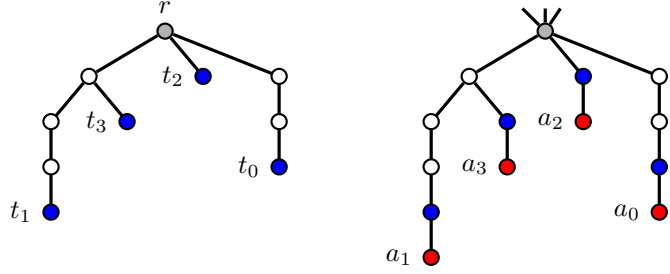
\begin{figure}[ht]
\centering
\begin{tikzpicture}[yscale=0.6]

\tikzstyle{avx}=[hvertex, fill=red]
\tikzstyle{tvx}=[hvertex, fill=blue]

\node[hvertex,fill=hellgrau,label=above:$r$] (r) at (0,0){};
\node[hvertex] (z) at (-1,-1){};
\draw[hedge] (r) -- (z);


\draw[hedge] (z) -- ++(-0.5,-1) node[hvertex] {} -- ++(0,-1) node[hvertex] {} -- ++(0,-1)  node[tvx,label=left:$t_1$]{}; 

\draw[hedge] (z) -- ++(0.5,-1)  node[tvx,label=left:$t_3$] {};

\draw[hedge] (r) -- ++(0.5,-1)  node[tvx,label=left:$t_2$] {};

\draw[hedge] (r) -- ++(1.5,-1) node[hvertex] {} -- ++(0,-1) node[hvertex] {} -- ++(0,-1) node[tvx,label=left:$t_0$]{};

\begin{scope}[shift={(5,0)}]
\node[hvertex,fill=hellgrau] (r) at (0,0){};
\node[hvertex] (z) at (-1,-1){};
\draw[hedge] (r) -- (z);

\draw[hedge] (r) edge ++(-0.3,0.5) edge ++(0,0.5) edge ++(0.2,0.5);

\draw[hedge] (z) -- ++(-0.5,-1) node[hvertex] {} -- ++(0,-1) node[hvertex] {} -- ++(0,-1) node[tvx]{} -- ++(0,-1) node[avx,label=left:$a_1$]{}; 

\draw[hedge] (z) -- ++(0.5,-1) node[tvx] {} -- ++(0,-1) node[avx,label=left:$a_3$] {};

\draw[hedge] (r) -- ++(0.5,-1) node[tvx] {} -- ++(0,-1) node[avx,label=left:$a_2$] {};

\draw[hedge] (r) -- ++(1.5,-1) node[hvertex] {} -- ++(0,-1) node[hvertex] {} -- ++(0,-1) node[tvx]{} -- ++(0,-1) node[avx,label=left:$a_0$]{}; 
\end{scope}

\end{tikzpicture}
\caption{Left: the tree we use to replace the $B_i$. Right: the tree with some vertices from $A$ 
attached to its leaves. Note that no zero $A$-path lies in the tree (plus $A$).}\label{replacetree}
\end{figure}

We first observe that
\begin{equation}\label{PP*}
\begin{minipage}[c]{0.8\textwidth}\em
for every zero $A$-path $P$ in $G$ there is a zero $A$-path $P^*$ in $G^*$, 
and vice versa, such that $P\cap (B\cup A)=P^*\cap (B\cup A)$.  
\end{minipage}\ignorespacesafterend 
\end{equation} 
Indeed, let  $P$ be a zero $A$-path with endvertices $a,a'$. 
Assume that a neighbour $b$ of $a$ in $P$ 
lies in some $B_i$. Then, as $P$ meets $B$ by assumption, the path $P$ passes through $b_i$.
Let the subpath $aPb_i$ have length $p$ modulo~$4$. Then we replace in $P$ 
the subpath $bPb_i$ by the path in {the copy} $T_i$ between the copy of $t_p$ and $b_i$.
We do this at both ends, if necessary, of $P$ and obtain $P^*$ in this way. 
That $P^*$ is indeed a path is due to the fact that if $P$ meets $b_i$ then it 
leaves $B_i$ there, as the interior of $P$ cannot be contained in $B_i$ by assumption.
Observe that the length modulo~$4$ did not change. The other direction is 
similar but uses the paths $Q^p_{a,i}$ and the observation that 
also in $G^*$ the interior of a zero $A$-path cannot be contained in any $T_i$.
(See Figure~\ref{replacetree} on 
the right.) 

Next we claim that
\begin{equation}\label{disjointP}
\emtext{
if $P_1^*,P_2^*$ are disjoint zero $A$-paths in $G^*$ then $P_1,P_2$ are also disjoint.
}
\end{equation}
By~\eqref{PP*}, $P_1$ and $P_2$ do not intersect in $B$. Moreover, as both need to meet $B$
by assumption they cannot meet in any $B_i$ either as then they would both need to contain $b_i\in V(B)$.
This proves~\eqref{disjointP}.

As a consequence of~\eqref{PP*} and~\eqref{disjointP}, 
we see that if $G^*$ has $k$ disjoint zero $A$-paths then so does $G$. 
By Lemma~\ref{bipcase}, we may therefore assume that $G^*$ admits a hitting set $X^*$
of size $|X^*|=\pfun(k)$. We define
\[
X=(X^*\cap V(G))\cup\{b_i:X^*\cap V(T_i)\neq\emptyset,\, i=1,\ldots,\ell\}
\]
and observe that $|X|\leq |X^*|$. 

Consider a zero $A$-path $P$ of $G$ that does not meet $X\cap B$. As its corresponding path $P^*$
is a zero $A$-path in $G^*$, by~\eqref{PP*}, it is  met by $X^*$. Thus $X^*$ contains some vertex 
from $T_i\cap P^*$ for some $i$. Then $b_i\in X$.
Moreover, as $P$ must meet $B$, it follows from $P\cap B=P^*\cap B$ that $P^*$, and thus $P$ as well,
passes through $b_i$. This implies that $P$ meets $X$. Consequently, $X$ is a hitting set in $G$.
\end{proof}

\section{Proof of Theorem~\ref{zeroepp}}

We prove Theorem~\ref{zeroepp} over the course of this section.
Before we start, we specify {the function that bounds the size of} the hitting set. For this, we 
will use a number of functions, such as $\rsfun(t)$, where we remind the reader 
that their index denotes the theorem or lemma in which the function is defined. 
One of these functions will only be defined later. We will make sure that it only depends on~$k$.

First, for every positive integer $k$, define $t^*(k)$ such that
\begin{equation}\label{choicet}
 t^*(k)\geq 2600k^3\emtext{ and }t^*(k)\geq \bfun(k)
\end{equation}

Next, define $g(k)$ such that
\begin{equation}\label{choiceg}
  g(k)\geq \fptfun(t^*(k))+2\emtext{ and } g(k)\geq \rsfun(\fptfun(t^*(k)))
\end{equation}

Third, we define $f(k)$, the size of the hitting set, such that
\begin{equation}\label{choicef}
 f(k)\geq\pfun(k)\emtext{ and }f(k)\geq 3g(k)+2f(k-1)+10
\end{equation}

Suppose that for some $k$, there is a graph that does not contain $k$ disjoint zero $A$-paths
and that does not admit a hitting set of size at most $f(k)$. 
We fix for the rest of this section a smallest such $k$ and  a graph $G$ such that
\begin{equation}\label{counterG}
\begin{minipage}[c]{0.8\textwidth}\em
every graph $H$ contains
either $k-1$ zero $A$-paths or a hitting set of size at most $f(k-1)$,
but $G$ does not contain $k$ zero $A$-paths and does not contain a hitting
set of size at most $f(k)$ either. 
\end{minipage}\ignorespacesafterend 
\end{equation} 

We start the proof of Theorem~\ref{zeroepp} by showing
 that $G-A$ admits a large tangle, and then a large wall, and therefore
satisfies the conditions of Theorem~\ref{oddflatwallthm}.
We remark that this approach is a fairly useful and also 
common approach for Erd\H os-P\'osa type questions. 
Similar arguments have been made, for instance, by 
Wollan~\cite{Wol11} and Liu~\cite{Liu17}. If $G$ and $H$ are two graphs,
we write $G-H$ for the induced subgraph $G-V(H)$ of $G$.

\newcommand{\eptan}{\ensuremath{\mathcal T_{\text{\rm EP}}}}

\begin{lemma} \label{largetanglem}
The graph $G-A$ admits a tangle \eptan\ of order $g(k)$ such that for each 
 separation $(C,D) \in \eptan$ every zero $A$-path has to intersect~$D-C$. 
Additionally, the graph $G[A \cup (D -C)]$ contains a zero $A$-path.
\end{lemma}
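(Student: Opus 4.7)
The plan is to define $\eptan$ explicitly as the set of all separations $(C,D)$ of $G-A$ of order less than $g(k)$ such that $G[A\cup V(D-C)]$ contains a zero $A$-path. The second assertion of the lemma is then built into the definition, and what remains is to verify the tangle axioms (T1)--(T3) together with the first assertion.

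I will first handle the easy pieces. The ``at least one of $(C,D),(D,C)$ lies in $\eptan$'' half of (T1) follows because if neither did, then any zero $A$-path avoiding $V(C\cap D)$ has connected interior in $G-A$ and is therefore confined to a single side, contradicting the assumption that neither side carries such a path; thus $V(C\cap D)$ is a hitting set of size less than $g(k)\leq f(k)$, contradicting~\eqref{counterG}. Axiom (T2) is immediate: if $V(C)=V(G-A)$ then $G[A\cup(D-C)]=G[A]$, which contains no $A$-paths since zero $A$-paths have length at least~$4$ and hence need interior vertices outside of~$A$.

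The main work is the ``not both'' half of (T1). Suppose $(C,D),(D,C)\in\eptan$, with witness zero $A$-paths $P\subseteq G_C=G[A\cup V(C-D)]$ and $Q\subseteq G_D=G[A\cup V(D-C)]$ having $A$-endpoints $\{a_1,a_2\}$ and $\{b_1,b_2\}$. I will apply~\eqref{counterG} at parameter $k-1$ to each of $G_D-\{a_1,a_2\}$ and $G_C-\{b_1,b_2\}$: if either contains $k-1$ disjoint zero $A$-paths, then adjoining $P$ or $Q$ produces $k$ disjoint zero $A$-paths in $G$ (the interiors lie in $V(D-C)$ or $V(C-D)$, which are disjoint, and endpoints avoid the removed pair), contradicting~\eqref{counterG}; so both applications yield hitting sets $X_D, X_C$ of size at most $f(k-1)$, and a case split on whether an arbitrary zero $A$-path meets $V(C\cap D)$ and, if not, on which side its interior lies, shows
\[
X_C\cup X_D\cup V(C\cap D)\cup\{a_1,a_2,b_1,b_2\}
\]
to be a hitting set of $G$ of size at most $2f(k-1)+g(k)+4\leq f(k)$, again contradicting~\eqref{counterG}. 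Axiom (T3) then becomes short: if $(C_i,D_i)\in\eptan$ for $i=1,2,3$ with $C_1\cup C_2\cup C_3=G-A$, then (T1) rules out zero $A$-paths in each $G[A\cup V(C_i-D_i)]$, while $\bigcap_i V(D_i-C_i)=V(G-A)\setminus\bigcup_i V(C_i)=\emptyset$; hence the connected interior of any zero $A$-path of $G$ avoiding the three separators would have to lie in some forbidden $V(C_i-D_i)$ or in the empty intersection, so $\bigcup_i V(C_i\cap D_i)$ is a hitting set of size at most $3g(k)\leq f(k)$, contradicting~\eqref{counterG}.

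Finally, the first property of the lemma is obtained by reusing the pruning device: a hypothetical zero $A$-path $R$ with endpoints $r_1,r_2$ avoiding $V(D-C)$ has interior in $V(C)$, disjoint from the $V(D-C)$-interior of any witness $Q$ for $(C,D)\in\eptan$, so applying~\eqref{counterG} at $k-1$ to $G_D-\{r_1,r_2\}$ either gives $k-1$ zero $A$-paths disjoint from $R$ (and hence $k$ disjoint in $G$) or a hitting set $X_D$ for which $X_D\cup V(C\cap D)\cup\{r_1,r_2\}$ hits every zero $A$-path of $G$ in size at most $f(k-1)+g(k)+2\leq f(k)$, in either case contradicting~\eqref{counterG}. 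The main obstacle throughout is the ``not both'' half of (T1): the worst-case hitting set $2f(k-1)+g(k)+4$ must fit into the budget allowed by~\eqref{choicef}, which is precisely why $f(k)$ is required to exceed $3g(k)+2f(k-1)+10$.
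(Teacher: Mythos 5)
Your proof is correct, and it follows the same overall strategy as the paper: orient each low-order separation of $G-A$ according to where a zero $A$-path can be found, and refute bad configurations by invoking the minimality of $k$ in~\eqref{counterG} to build a hitting set of size at most $f(k)$. The difference lies in how the crucial ``exactly one side'' step is handled. The paper defines \eptan\ via the closed sides ($(C,D)\in\eptan$ iff $G[A\cup D]$ has a zero $A$-path) and proves the stronger dichotomy~\eqref{exclm} for $G[A\cup C]$ versus $G[A\cup D]$; to cope with the possibility that witness paths on the two sides share endvertices in $A$, it first extracts a matching of size five in an auxiliary graph on $A$, yielding three witnesses with pairwise distinct endvertex pairs so that one of them avoids the endvertices of the path on the other side. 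You instead work with the open sides $G[A\cup(C-D)]$ and $G[A\cup(D-C)]$ and sidestep the collision problem by deleting the two endvertices of the opposing witness before applying~\eqref{counterG}; this eliminates the matching/vertex-cover step entirely and is arguably cleaner. The price is that your definition no longer yields ``every zero $A$-path meets $D-C$'' for free (in the paper this is immediate from~\eqref{exclm}, since a path avoiding $D-C$ lives in $G[A\cup C]$), so you need a third instance of the same pruning device to rule out a zero $A$-path through $C\cap D$; note that in that last step the case of a path with interior in $C-D$ is only implicitly dismissed --- it is covered by your earlier ``not both'' conclusion that $(D,C)\notin\eptan$, and it would be worth saying so explicitly. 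All size bounds you use ($2f(k-1)+g(k)+4$, $3g(k)$, and $f(k-1)+g(k)+2$) fit within~\eqref{choicef}, so the argument goes through.
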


\begin{proof}
Consider a separation $(C,D)$ in $G - A$ of order at most $g(k)-1$. 
We first prove:
\begin{equation}\label{exclm}
\begin{minipage}[c]{0.8\textwidth}\em
Exactly one of the graphs $G[A \cup C]$ and $G[A \cup D]$
contains a zero $A$-path.
\end{minipage}\ignorespacesafterend 
\end{equation} 

We define an auxiliary graph $H$ on the vertex set $A$, 
where $a_1,a_2 \in A$ are adjacent if there is a zero $A$-path in $G-(C\cap D)$ 
with endvertices $a_1$ and $a_2$. If $H$ has no matching of size five then 
there exists a vertex cover $X$ of at most ten vertices. In that case, $X\cup (C\cap D)$ would
combine to a hitting set for $G$, which is impossible as $10+|C\cap D|\leq g(k)+10\leq f(k)$
by~\eqref{choicef}.

Thus, $H$ has a matching of size five, which implies that at least one of 
$G[A \cup (C - D)]$ and $G[A \cup (D - C)]$, say the latter, contains three zero $A$-paths 
such that their endvertices, $\{a_1,a_2\}$, $\{a_3,a_4\}$ and $\{a_5,a_6\}$ are all distinct.

Now, contrary to~\eqref{exclm}, suppose that also  $G[A \cup C]$ contains a zero $A$-path~$P$.
Then, the endvertices $a,a'$ of $P$ are disjoint from one of the 
pairs $\{a_1,a_2\}$, $\{a_3,a_4\}$ and $\{a_5,a_6\}$, let us assume from $\{a_1,a_2\}$. 
We form two  subgraphs $G_1=G[(A-\{a_1,a_2\})\cup C]$ and $G_2=G[(A-\{a,a'\})\cup (D-C)]$ of $G$
that are disjoint outside $A$, 
and use~\eqref{counterG} for each of them. If $G_1$ contains $k-1$ 
disjoint zero $A$-paths then we find, together with the zero 
$A$-path contained in $G[\{a_1,a_2\} \cup (D-C)]$,
$k$ disjoint zero $A$-paths, which we had excluded in~\eqref{counterG}. 
Similarly, $G_2$ cannot contain $k-1$ disjoint zero $A$-paths.
Thus, for $i=1,2$ there must be a hitting set $X_i$ of size at most $f(k-1)$ in $G_i$.
But then, the set $X_1\cup X_2\cup (C\cap D)\cup\{a_1,a_2,a,a'\}$ 
is a hitting set for $G$ of size at most
\[
2f(k-1)+g(k)+4\leq f(k),
\]
by~\eqref{choicef},
which means that we are done. Therefore,~\eqref{exclm} is proved. 

We use~\eqref{exclm} to define a tangle~\eptan\ of order $g(k)$: 
if $(C,D)$ is a separation of $G-A$ of order less than $g(k)$ then include $(C,D)$ in~\eptan\
if $G[A \cup D]$ contains a zero $A$-path. If \eptan\ defines a tangle, we are 
done: indeed, note  that for $(C,D)\in\eptan$ 
 any zero $A$-path needs to meet $D-C$ as otherwise there would be a
zero $A$-path contained in $G[A\cup (C\cap D)]$, in contradiction to~\eqref{exclm}. 
Moreover, $C\cap D$ cannot be a hitting set as it is too small, which means there must 
be a zero $A$-path that avoids $C\cap D$ and thus is contained in $G[A\cup (D-C)]$.

To see that $\eptan$ is a tangle, we still have to verify that conditions~(T\ref{tangle2})
and~(T\ref{tangle3}) of the tangle definition are satisfied. 
For~(T\ref{tangle2}) suppose that  $V(C) = V(G - A)$ for some $(C,D)\in\eptan$. 
By~\eqref{exclm}, there is a zero $A$-path in $G[A \cup (D - C)]$.
As, on the other hand, $D - C = \emptyset$,
it follows that this  path has to be completely contained in $G[A]$, which is impossible
as the only  $A$-paths in $G[A]$ have length~$1$. 

Suppose that~(T\ref{tangle3}) is false, i.e.\ suppose that there are $(C_1,D_1)$, $(C_2,D_2)$, $(C_3,D_3)\in\eptan$
with  $C_1 \cup C_2 \cup C_3 = G - A$.
Consequently, every  zero $A$-path must meet $C_i$ for some $i$, which implies that 
the set $\bigcup_{i=1}^3 (C_i \cap D_i)$ is a hitting set of size at most $3g(k)\leq f(k)$,
by~\eqref{choicef},
which again means that we are done. 
\end{proof}

A \emph{windmill} is a graph consisting of the union of three paths $P_1,P_2,P_3$ and of three cycles $C_1,C_2,C_3$
such that
\begin{itemize}
\item the paths $P_1,P_2,P_3$ share an endvertex $x$, but are otherwise disjoint;
\item the cycles $C_1,C_2,C_3$ have odd lengths;
\item $C_i\cap P_i$ is a path of length at least~$1$ for $i=1,2,3$; and 
\item $C_i$ is disjoint from $\bigcup_{j\neq i}P_j\cup C_j$.
\end{itemize}
If $a_i$ is the endvertex of $P_i$ that is not equal to $x$, then $a_1,a_2,a_3$ are the \emph{tips}
of the windmill.

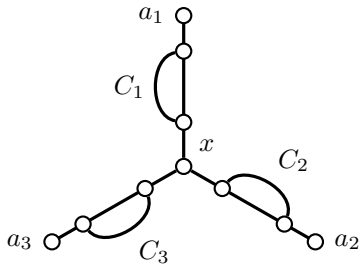
\begin{figure}[ht]
\centering
\begin{tikzpicture}
\def\step{2}
\node[hvertex,label=above right:$x$] (x) at (0,0){};
\draw[hedge] (x) to node[hvertex,near start] (a1){} node[hvertex,near end] (a2){} (0,\step) node[hvertex,label=left:$a_1$](a){};
\draw[hedge] (x) to node[hvertex,near start] (b1){} node[hvertex,near end] (b2){} (-30:\step) node[hvertex,label=right:$a_2$](b){};
\draw[hedge] (x) to node[hvertex,near start] (c1){} node[hvertex,near end] (c2){} (210:\step) node[hvertex,label=left:$a_3$](c){};
\draw[hedge,bend left=80] (a1) to node[midway,auto]{$C_1$} (a2);
\draw[hedge,bend left=80] (b1) to node[midway,auto]{$C_2$} (b2);
\draw[hedge,bend left=80] (c1) to node[midway,auto]{$C_3$} (c2);
\end{tikzpicture}
\caption{A windmill}\label{windfig}
\end{figure}

\begin{lemma}\label{clawlem}
Let $W$ be a windmill with tips $a_1,a_2,a_3$. Then $W$ contains a zero $\{a_1,a_2,a_3\}$-path.
\end{lemma}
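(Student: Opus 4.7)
The plan is to exploit the odd cycles $C_1,C_2,C_3$ as ``parity switches'' for the three arms, then finish by a pigeonhole on residues modulo~$4$. For each index $i\in\{1,2,3\}$, I would define two paths from $a_i$ to $x$: the \emph{direct} path $R_i$ along $P_i$, of some length $p_i$, and the \emph{detour} $S_i$ obtained from $R_i$ by replacing the shared subpath $C_i\cap P_i$ (of length $q_i\geq 1$) with the other arc of $C_i$ (of length $c_i-q_i$, where $c_i=|C_i|$). Since each $C_i$ is disjoint from $P_j\cup C_j$ for $j\neq i$, and since $x\notin C_i$ (because $x\in P_j$ for $j\neq i$ forces $x\notin C_i\cap P_j=\emptyset$), the detour $S_i$ really is a path, and its length $p_i+c_i-2q_i$ differs from $p_i$ by the odd number $c_i-2q_i$.

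Consequently, exactly one of $R_i,S_i$ has even length; call it $Q_i$ and set $\alpha_i := |Q_i|\bmod 4\in\{0,2\}$. By pigeonhole, two indices $i\neq j$ satisfy $\alpha_i=\alpha_j$. Concatenating $Q_i$ and $Q_j$ at $x$ yields a path from $a_i$ to $a_j$ whose length modulo~$4$ is $\alpha_i+\alpha_j$, equal to either $0+0=0$ or $2+2\equiv 0\pmod 4$. That the concatenation is a genuine $\{a_1,a_2,a_3\}$-path, and not a mere walk, is immediate from the windmill's disjointness conditions: the three arms share only $x$, each $C_i$ is disjoint from the other two arms and cycles, and the remaining tip $a_k$ (for $k\notin\{i,j\}$) lies on arm $k$ only.

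I do not anticipate a genuine obstacle here; the lemma is essentially a short parity-counting exercise. The only care required is in unpacking the definitions — in particular verifying that $x\notin C_i$ and that $C_i\cap P_i$ is a proper subpath of $C_i$ (so $c_i-q_i\geq 1$ and the detour is nontrivial) — and in observing that the construction of $Q_1,Q_2,Q_3$ can be done simultaneously thanks to the pairwise disjointness of $C_1,C_2,C_3$. Once those bookkeeping points are checked, the pigeonhole step closes the argument.
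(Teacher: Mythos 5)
Your proof is correct, and it rests on the same structural fact as the paper's: the odd cycle $C_i$ provides two $a_i$--$x$ paths of opposite parity, and two even paths with equal residue modulo~$4$ concatenate to a zero path since $2\cdot 0\equiv 2\cdot 2\equiv 0\pmod 4$. Your finish is cleaner, though. You discard the odd path from each arm immediately, keep only the even one with residue $\alpha_i\in\{0,2\}$, and pigeonhole three values into a two-element set. The paper instead keeps both residues $\alpha,\beta$ of the first arm, assumes for contradiction that no zero path uses $a_1$, deduces that the residues available at arms $2$ and $3$ both equal the complementary pair $\mathbb Z_4\setminus\{-\alpha,-\beta\}$, and only then extracts a common even residue — arriving at the same final concatenation by a slightly longer route. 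Your bookkeeping is also sound: $C_i\cap P_i$ is by definition a path of length at least~$1$ and cannot be all of the cycle $C_i$, so the detour arc is nontrivial and meets $P_i$ only in the endpoints of $C_i\cap P_i$; and the disjointness clauses of the windmill definition guarantee that the concatenation at $x$ is a path avoiding the third tip. No gap.
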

\begin{proof}
For $i=1,2,3$ the windmill contains two distinct $a_i$--$x$~paths $P_{i1},P_{i2}$, 
each using a different path along
the cycle $C_i$. Since the length of $C_i$ is odd it follows that 
one of $P_{i1},P_{i2}$ has odd length and the other even length. 
In particular, the lengths are different  modulo~$4$. 

Let $\alpha,\beta\in\mathbb Z_4$ 
be the lengths of $P_{11},P_{12}$ modulo~$4$. Then $\alpha\neq\beta$. 
If  $W$ {contains} a zero $a_1$--$\{a_2,a_3\}$~path then we are done. So assume that is not the case. 
Thus none of the paths $P_{21},P_{22},P_{31},P_{32}$ has length congruent to $-\alpha$ or to $-\beta$. 
{If} $\{\gamma,\delta\}=\mathbb Z_4\setminus\{-\alpha,-\beta\}$ then the paths $P_{21},P_{22}$ have 
lengths congruent to $\gamma,\delta$ modulo~$4$,
and this is also the case for $P_{31},P_{32}$. 
Since the paths $P_{i1},P_{i2}$ have different parity, 
either $\gamma$ or $\delta$, say $\gamma$, is one of $0,2$. 
By combining the two paths of length congruent to $\gamma$ we obtain a zero $a_2$--$a_3$~path. 
\end{proof}

By Lemma~\ref{largetanglem} and Theorem~\ref{robseywalltangle}, the graph $G-A$ contains a 
large wall such that its induced tangle is a truncation of \eptan. 
Thus, one of the four cases of Theorem~\ref{oddflatwallthm} will apply to $G-A$. 

We first consider the case where there is a large odd $K_t$-model in $G-A$. 
Equivalently, this means that $G$ contains an odd $K_t$-model that is disjoint from $A$. 
We say a collection of disjoint paths \emph{nicely link} to a $K_t$-model $\pi$ 
if each path intersects exactly one branch set of $\pi$ and different paths intersect different branch sets. Moreover, a path from $A$ to a branch set of $\pi$ is called an $A$-$\pi$-path.

We also need the notion of submodels of models. If $H'$ is a subgraph of $H$ and there is an $H$-model $\pi$ in $G$, then the $H'$-submodel of $\pi$ is the restriction of $\pi$ to $H'$.

\begin{lemma} \label{pathstoclique}
Let $\ell,t \in \N$ and $t \geq 3\ell$ and let $\pi$ be a $K_t$-model in a graph {$H$}. For a set of vertices $A \subseteq {V(H)}$ there is a $K_{t-2\ell}$-submodel $\eta$ such that there are either $\ell$ disjoint 
$A$--$\eta$~paths that nicely link to $\eta$ or a set $X$ of at most $2\ell-1$ vertices that separates 
$A$ from $\eta$.
\end{lemma}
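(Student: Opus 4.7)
The plan is to reduce to Menger's theorem via an auxiliary construction. Let $B_1,\ldots,B_t$ denote the branch sets of $\pi$. I would construct an auxiliary graph $H'$ from $H$ by adding a source $s$ adjacent to every vertex of $A$, auxiliary vertices $b_1^*,\ldots,b_t^*$ with $b_i^*$ adjacent to every vertex of $B_i$, and a sink $z$ adjacent to every $b_i^*$. The key point is that any internally disjoint family of $s$--$z$ paths in $H'$ translates into a disjoint family of $A$--$V(\pi)$ paths in $H$ that nicely link to $\pi$, because distinct paths use distinct $b_i^*$ and therefore end in distinct branch sets.

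Next, I apply Menger's theorem to the non-adjacent pair $(s,z)$ in $H'$. Either there are $\ell$ internally disjoint $s$--$z$ paths, or there is an $s$--$z$ separator $S$ of size at most $\ell-1$, which we may assume contains neither $s$ nor $z$. Write $S = X \cup \{b_{i_1}^*, \ldots, b_{i_k}^*\}$ with $X \subseteq V(H)$. In the first case, stripping off $s$, $z$ and the terminal $b_i^*$'s yields $\ell$ nicely-linking $A$--$\pi$ paths ending in $\ell$ distinct branch sets. Since $t \geq 3\ell$ one has $\ell \leq t - 2\ell$, so I can choose a $K_{t-2\ell}$-submodel $\eta$ whose branch sets include all $\ell$ branch sets hit by the paths, and the paths then nicely link to $\eta$.

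In the separator case, set $I = \{i_1,\ldots,i_k\}$. Any $A$--$B_l$ walk in $H - X$ with $l \notin I$ would extend to an $s$--$z$ walk in $H' - S$ by prepending $s$ and appending $b_l^*$ and $z$, contradicting the separation property of $S$. Hence $X$ separates $A$ from $\bigcup_{l \notin I} B_l$ in $H$. Since $k \leq \ell - 1 < 2\ell$, I may discard the $k$ branch sets indexed by $I$ together with a further $2\ell - k$ branch sets to obtain a $K_{t-2\ell}$-submodel $\eta$; then $X$ separates $A$ from $\eta$ in $H$ and $|X| \leq \ell - 1 \leq 2\ell - 1$, as required.

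The only delicate point is the translation between the separator $S$ in $H'$ and the claimed separation in $H$: even if $b_j^* \in S$, the vertices of $B_j$ themselves remain present in $H - X$ and may be traversed internally by walks, which is exactly why those branch sets must be physically removed from $\eta$ rather than absorbed into $X$. Everything else is a direct application of Menger.
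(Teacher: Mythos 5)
There is a genuine gap in the linking case, and it sits exactly where the real difficulty of this lemma lies. Recall that ``nicely link'' requires each path to intersect \emph{exactly one} branch set of the model and different paths to intersect different branch sets. Your translation step claims that internally disjoint $s$--$z$ paths in $H'$ yield nicely linking $A$--$\pi$ paths ``because distinct paths use distinct $b_i^*$ and therefore end in distinct branch sets.'' That only controls the \emph{terminal} branch sets. Nothing in the auxiliary construction prevents a path from wandering through many other branch sets $B_j$ before reaching the vertex adjacent to its terminal $b_i^*$; indeed a single such path could meet all $t$ branch sets. Consequently your later step --- ``choose a $K_{t-2\ell}$-submodel $\eta$ whose branch sets include all $\ell$ branch sets hit by the paths, and the paths then nicely link to $\eta$'' --- does not go through: to make the paths nicely link to $\eta$ you would have to \emph{exclude} from $\eta$ every branch set met internally by some path, and you have no bound whatsoever on how many such branch sets there are, so you cannot guarantee that $t-2\ell$ branch sets survive. (Your separator case is fine, and is essentially the same as the paper's.)

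This missing control is precisely what the paper's proof spends almost all of its effort on. It asks Menger for $2\ell$ paths (not $\ell$), and then chooses the path system subject to a careful optimization: the paths meet at most $2\ell$ branch sets beyond their ``private ends'', the number of private ends is maximized, and the number of branch-set visits is minimized. A rerouting argument through $\pi(v_1)\cup\pi(v_2)\cup\pi(v_1v_2)$ then shows that at least $\ell$ paths have private ends; discarding the at most $2\ell$ other branch sets they touch produces the $K_{t-2\ell}$-submodel to which those $\ell$ paths nicely link. Some such truncation-and-rerouting argument (or an equivalent minimality choice of the path system) is unavoidable here; a one-shot application of Menger in an auxiliary graph cannot by itself enforce the ``exactly one branch set'' condition.
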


\begin{proof}
Let the vertex set of $\pi$ be $\{v_1,\ldots,v_T\}$.
For every vertex $v_i$, 
we select a vertex $u_i\in V(\pi(v_i))$.
Let $U=\{u_1,\ldots,u_T\}$.
By Menger's theorem,
there are either $2t$ disjoint $A$--$U$-paths or a set $X$ of size less than $2t$ that separates $A$ from $U$.

Assume first that there is a set $X$ of size $|X|<2t$ that separates $A$ from $U$. 
Then,
every branch set $\pi(v_i)$ of $\pi$ that is not separated from $A$ by $X$
has to meet $X$ --- this is because branch sets are connected. Thus, at least $T-2t$
of the 
branch sets of $\pi$ are separated from $A$ by $X$; taking these yields the desired $K_{T-2t}$-subexpansion~$\eta$.

Second, we treat the case when there are $2t$ disjoint $A$--$U$-paths $P_1,\ldots,P_{2t}$.
Denote their union by $\cP$.
Clearly, we can assume that if $u_i$ is not contained in any path in $\cP$,
then $\pi(v_i)$ is disjoint from any path in $\cP$.

We say that a path $P\in\cP$ with endvertex $u_i\in U$ 
has a \emph{private end} $v_i$ if $\pi(v_i)$ meets no path in $\cP-P$.
Let $\text{PE}(\cP)$ be the set of private ends of the paths in $\cP$.
We can partition $P$ into disjoint subpaths $Q_1,\ldots, Q_k$ such that $Q_j$ only intersects exactly one vertex of $\pi$.
If $k$ is minimal with respect to this property, we say $P$ visits $\pi$ (exactly) $k$ times and write $\text{vis}(P)=k$.
Let $\text{vis}(\cP)=\sum_{P\in \cP}\text{vis}(P)$.

We next choose $\cP$ such that 
\begin{enumerate}[\rm (i)]
	\item the paths in $\cP$ intersect at most $2t+|\text{PE}(\cP)|$ branch sets of $\pi$
	\item subject to (i), $\text{pe}(\cP)$ is maximal, and
	\item subject to (ii), $\text{vis}(\cP)$ is minimal.
\end{enumerate}
Clearly, this is a valid choice. 
Suppose that  $|\text{PE}(\cP)|< t$, i.e.\ that $\cP$ has fewer than $t$ private ends.
As $T\geq 3t$, 
we may assume that no path in $\cP$ intersects $\pi(v_1)$.

Denote by $\text{PE}^-(\cP)$ the set of vertices $v_j$ of $\pi$ such that there is a path $P\in\cP$
with a private end $v_i\neq v_j$ such that $P$ meets $\pi(v_j)$,  in a vertex $u'_j$ say, 
and such that the end segment $u'_jPu_i$ of $P$ is disjoint from any branch set of $\pi$, 
except for $\pi(v_i)$ and $\pi(v_j)$. That is, $P$ passes through $\pi(v_j)$ and then ends
in $\pi(v_i)$, its private end, without meeting other branch sets in between. 

Obviously $|\text{PE}^-(\cP)|\leq |\text{PE}(\cP)|$. Thus, $|\text{PE}^-(\cP)|+ |\text{PE}(\cP)|<2t$.
As, in addition, $\cP$ meets at least $2t$ branch sets, it follows that there is a 
vertex in $\{v_1,\ldots, v_T\}\sm (\text{PE}^-(\cP)\cup \text{PE}(\cP))$ such that its branch set
meets a path in $\cP$. We may assume that $v_2$ is like that. 

Let $Q$ be the unique $u_1$--$u_2$-path in $\pi(v_1)\cup \pi(v_2)\cup \pi(v_1v_2)$.
Let $P\in \cP$ be the first path that intersects $Q$, say in $q$, seen from $u_1$.
Let $a$ be the  endvertex of $P$ in $A$.
Observe that replacing $P$ by $aPqQu_1$ contradicts the choice of $\cP$,
as either $|\text{PE}(\cP)|$ increases, 
or $\text{vis}(\cP)$ decreases while $\text{pe}(\cP)$ does not change.

We obtain the desired the set of paths by selecting $t$ paths from $\cP$ 
that each have a private end. Indeed, by~(i), the $t$ paths intersect at most $2t$ branch sets
besides the ones of their private ends. Omitting these from $\pi$ leads to a
$K_{T-2t}$-subexpansion $\eta$ such that the paths nicely link to it.
\end{proof}

\begin{lemma}\label{cliqueminor}
{
Let $t\geq 16k$
If $G$ contains 
an  odd $K_t$-model $\pi$ that is disjoint from $A$ and 
whose induced tangle is a truncation of \eptan\ 
then $G$ contains $k$ disjoint zero $A$-paths.}
\end{lemma}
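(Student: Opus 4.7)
The plan is to use Lemma~\ref{pathstoclique} to link $A$ into the odd $K_t$-model $\pi$, and then to exploit the odd $3$-cycles of $\pi$ in order to assemble $k$ pairwise-disjoint \emph{windmills}; Lemma~\ref{clawlem} then yields one zero $A$-path inside each windmill. Concretely, I would apply Lemma~\ref{pathstoclique} with $\ell := 3k$; the hypothesis $t \geq 3\ell$ is satisfied because $t \geq 16k$, and the submodel $\eta$ it produces has $t - 2\ell \geq 10k$ branch sets. It remains to handle the two possible outputs of the lemma.

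The main obstacle, I expect, is \emph{ruling out} the separator case, since this is where the tangle \eptan\ and the outer induction hypothesis~\eqref{counterG} must cooperate. Suppose Lemma~\ref{pathstoclique} returns a set $X$ of at most $6k - 1$ vertices separating $A$ from $\eta$ in $G$. Let $X' := X \sm A$ and $A_X := X \cap A$, let $V_1$ be the set of vertices of $G - A$ reachable from $A$ in $G - X$, and set $V_2 := V(G - A) \sm (V_1 \cup X')$. The branch sets of $\eta$, being disjoint from $X$ (by the proof of Lemma~\ref{pathstoclique}) and unreachable from $A$ in $G - X$, lie entirely in $V_2$. Hence $(C, D)$ with $V(C) := V_1 \cup X'$ and $V(D) := V_2 \cup X'$ is a separation of $G - A$ of order at most $6k - 1 < g(k)$, and every branch set of $\pi$ meets $V(D)$ (those of $\eta$ via $V_2$; the remaining $2\ell$ ones meet $X$ and hence $X' \subseteq V(D)$). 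Thus $(C, D) \in \mathcal T_\pi$, and by the truncation hypothesis $(C, D) \in \eptan$, so Lemma~\ref{largetanglem} furnishes a zero $A$-path $P^*$ with interior in $V_2$. Any vertex of $V_2$ with a neighbour $a \in A$ forces $a \in A_X$ (else the edge would be in $G - X$, putting the $V_2$-vertex into $V_1$), so both endvertices of $P^*$ lie in $A_X$. I then apply~\eqref{counterG} to $G - X$ with the set $A \sm A_X$: if $G - X$ contains $k - 1$ disjoint zero $(A \sm A_X)$-paths, then, because $V_1$ and $V_2$ are disconnected in $G - X$ and $V_2$ is unreachable from $A \sm A_X$, all of them have interior in $V_1$, and together with $P^*$ they give $k$ disjoint zero $A$-paths in $G$; if on the other hand $G - X$ has a hitting set $Y$ of size at most $f(k - 1)$, then $Y \cup X$ is a hitting set for $G$ of size at most $f(k-1) + 6k - 1 \leq f(k)$, by the choice of $f$. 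Both outcomes contradict~\eqref{counterG}, so the separator case does not occur.

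We are therefore in the path case, in which Lemma~\ref{pathstoclique} produces $3k$ disjoint paths $Q_j^i$ ($i \in [k]$, $j \in \{1,2,3\}$) from distinct vertices $a_j^i \in A$ to distinct branch sets $\pi(v_j^i)$ of $\eta$, nicely linking to $\eta$. Since $\eta$ has at least $10k$ branch sets, I can privately allocate to each triple $i$ seven further unused branch sets: $\pi(u_j^i)$ and $\pi(\bar u_j^i)$ for $j \in \{1,2,3\}$, and a hub $\pi(w^i)$. In the $i$-th windmill the three tip-paths $P_j^i$ run from $a_j^i$ along $Q_j^i$ into $\pi(v_j^i)$, then across the model-edge $\pi(v_j^i u_j^i)$ into $\pi(u_j^i)$, across $\pi(u_j^i w^i)$ into $\pi(w^i)$, and finally to a common vertex $x^i \in \pi(w^i)$; I take $x^i$ to be the tree median in $\pi(w^i)$ of the three entry points of the $P_j^i$, which makes the three tip-paths vertex-disjoint inside $\pi(w^i)$ apart from $x^i$. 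The odd cycle $C_j^i$ is the unique cycle of $\pi(v_j^i) \cup \pi(u_j^i) \cup \pi(\bar u_j^i)$ together with the three model-edges $\pi(v_j^i u_j^i)$, $\pi(u_j^i \bar u_j^i)$, $\pi(\bar u_j^i v_j^i)$, and it is odd because $\pi$ is an odd $K_t$-model. By construction $C_j^i$ shares with $P_j^i$ at least the edge $\pi(v_j^i u_j^i)$, so $C_j^i \cap P_j^i$ is a path of length at least $1$; the private branch set $\pi(\bar u_j^i)$ used by $C_j^i$ appears in no other path or cycle, and the $k$ windmills use pairwise-disjoint branch sets, so they are disjoint in $G$. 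Lemma~\ref{clawlem} now extracts a zero $\{a_1^i, a_2^i, a_3^i\}$-path from each windmill, and the resulting $k$ zero $A$-paths are pairwise disjoint, which is what we needed.
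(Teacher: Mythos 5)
Your proof is correct and follows essentially the same route as the paper's: Lemma~\ref{pathstoclique} with $\ell=3k$, a tangle contradiction to rule out the separator outcome, and $k$ disjoint windmills assembled from groups of ten branch sets of the odd model, finished off by Lemma~\ref{clawlem}. The only divergence is local: in the separator case the paper reads off an immediate contradiction from Lemma~\ref{largetanglem} (the zero $A$-path in $G[A\cup(D-C)]$ would have to cross $X$), whereas you, to cover the possibility $X\cap A\neq\emptyset$, combine the path $P^*$ with the minimal-counterexample hypothesis~\eqref{counterG}; both work, and yours is arguably the more careful of the two.
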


\begin{proof}
We apply Lemma~\ref{pathstoclique} with $\ell = 3k$ to $\pi$ and $A$ in $G$. 
As $t \geq 3 \cdot 3k$, we find 
a $K_{t-6k}$-submodel $\eta$ of $\pi$ such that there are either $3k$ disjoint 
$A$--$\eta$~paths that nicely link to $\eta$, or a set $X$ of $6k-1$ vertices that separates $A$ from $\eta$. Since $t -6k \geq 10k$, we {deduce} that $\eta$ has at least $10k$ branch sets.  

Suppose first that Lemma~\ref{pathstoclique} yields such a set $X$. 
Denote by $C'$ the set of all vertices that can be reached from $A$ in $G-X$, and let $D'$ be the 
set of those vertices that cannot be reached. Put $C=G[C'\cup X]$ and $D=G[D'\cup X]$ and observe
that $(C,D)$ is a separation of order~$|X|\leq 6k-1$. In particular, either $(C,D)\in\mathcal T_\pi$
or $(D,C)\in\mathcal T_\pi$ as $ 6k-1 < 6k = \lceil \frac{3\cdot 6k}{3} \rceil \leq \lceil \frac{2\cdot t}{3} \rceil$. Pick one of the at least $10k$ branch sets $Y$ of $\eta$ that is disjoint from $X$ (which has 
size less than $6k$), and observe that $Y\subseteq D'=D-C$ by definition of $X$. Thus $(C,D)\in\mathcal T_\pi$.
On the other hand, $\mathcal T_\pi$ is a truncation of \eptan, which means, by Lemma~\ref{largetanglem},
that $G[A\cup (D-C)]$ contains a zero $A$-path. This is impossible, however, as $X$ separates $D-C$ from $A$.
 
Therefore, Lemma~\ref{pathstoclique} yields a set $\mathcal P$ of $3k$ disjoint $A$--$\eta$~paths
that nicely link to $\eta$. 
Choose $10k$ of the branch sets of $\eta$ so that they include the ones in which the paths end, 
and partition them into groups of ten branch sets each, such that always exactly three of them 
{in each group}
contain an endvertex of one of the paths. Consider such a group $Y_1,\ldots, Y_{10}$, where 
we assume that for $i=1,2,3$ the path $P_i\in\mathcal P$ ends in $Y_i$. 
We will construct a windmill with its tips in $A$
in the (induced graph on the) union of $Y_1,\ldots,Y_{10}$ together with the paths $P_1,P_2,P_3$. Note that 
such a windmill meets $A$ only in its tips as $\pi$ and thus $\eta$ is disjoint from $A$. 
 In total, we will thus find $k$ disjoint windmills that meet $A$ precisely in their tips. 
Lemma~\ref{clawlem} then yields $k$ disjoint zero $A$-paths. 

To construct such a windmill, observe that  $\eta$ is (or rather contains) an odd $K_{10k}$-model.
Thus, there is an odd cycle $C_1$ contained in the induced graph on $Y_1\cup Y_4\cup Y_5$.
There is, moreover, a $P_1$--$C_1$~path $Q_1$ starting in the endvertex of $P_1$ in $C_1$ contained in $Y_1$,
and there is a  $C_1$--$Y_{10}$~path $R_1$ that is contained
in $Y_4\cup Y_{10}$, and that then meets $Y_{10}$ only in its final vertex, which we denote by $r_1$.
Note that the endvertices of both these paths on $C_1$ are distinct. 
Set $T_1=P_1\cup Q_1\cup C_1\cup R_1$ and observe that $T_1-r_1$ is contained in the induced graph on $Y_1\cup Y_4\cup Y_5\cup P_1$.
Using $Y_2,Y_6,Y_7,P_2$ and $Y_3,Y_8,Y_9,P_3$, we find  similar graphs $T_2,T_3$ such that $T_1,T_2,T_3$
meet at most in their final vertices in $Y_{10}$. Pick a subtree $T$ of $Y_{10}$ with leaves $r_1,r_2,r_3$
and observe that $T_1\cup T_2\cup T_3\cup T$ is a windmill with tips in $A$.
\end{proof}

Next, we treat outcome~(b) of Theorem~\ref{zeroepp}. 
This is split into two subcases: first we deal with the case when there is a large wall whose 
bricks are all odd, and  then with the case when there is a bipartite wall with an odd linkage.

Let $W$ be a wall with nails $N$. 
A set $\mathcal P$ of {disjoint} paths \emph{nicely links to $W$} if each path $P\in\mathcal P$
is contained in $G-(W-N)$ and ends in a (distinct) nail of~$W$.

\begin{lemma}\label{linklem}
Let $r,t$ be positive integers with $r\geq t$.
Let {$H$} be a graph containing a wall $W$ of size at least $4tr$, and let $A\subseteq {V(H)}$ 
be disjoint from $W$. 
Then $W$ has a subwall $W_1$ of size at least $r$ such that there is either a 
vertex set $X$ of size $|X|<3t^2$ that separates $A$ from $W_1$ or there is a 
set $\mathcal P$ of $t$ disjoint paths that nicely links $A$ to $W_1$. 
\end{lemma}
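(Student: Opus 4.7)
The plan follows a Menger-type dichotomy: either $A$ is separable from $V(W)$ in $H$ by a few vertices, in which case the same set separates $A$ from every subwall of $W$; or Menger produces many disjoint $A$--$V(W)$ paths, which I reroute through the wall structure to end at nails of a carefully chosen subwall.

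First, I apply Menger's theorem in $H$ between $A$ and $V(W)$. If it yields a vertex separator $X$ with $|X|\leq t-1<3t^2$, I pick $W_1$ to be any $1$-contained subwall of $W$ of size $r$; since $W$ has size $4tr$, such a subwall exists in its interior, and $X$ then automatically separates $A$ from $V(W_1)\subseteq V(W)$. Otherwise, Menger produces $t$ pairwise internally disjoint paths $P_1,\ldots,P_t$ from $A$ to $V(W)$, and by shortness we may assume that $P_i$ meets $W$ only at its terminal vertex $v_i$.

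Next, I split the interior of $W$ into roughly $4t$ disjoint $1$-contained vertical subwalls of size (at least) $r$, which is possible because $W$ has size $4tr$. Since the $t$ endpoints $v_i$ lie in at most $t$ of these strips, a pigeonhole argument produces a strip $W_1$ that contains no $v_i$ and that has sufficiently many untouched strips on either side. I then reroute each $P_i$ through $W$ as follows: starting from $v_i$, ascend along the vertical path of $W$ that contains $v_i$ to the top row $H_1$ of $W$; travel along $H_1$ across to the column of a designated nail of $W_1$; and finally descend one edge from $H_1$ to that nail, which lies on the top row of $W_1$ (a horizontal path of $W$). By assigning distinct target nails (there are at least $r\geq t$ to choose from) and processing the paths in left-to-right order on $H_1$, the rerouting segments are pairwise disjoint and avoid the interior of $W_1$, so together with the $P_i$ they form the desired set $\mathcal P$ nicely linking $A$ to $W_1$.

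The main obstacle will be executing this rerouting disjointly when the $v_i$ cluster together: several $v_i$ could share a vertical path of $W$, and the internal portions of the $P_i$ could intersect the horizontal routing segments along $H_1$. Overcoming this relies on the generous size $4tr$ of $W$ relative to $t$, which provides enough room to shift the choice of $W_1$ and its target nails around obstructions; the gap between the quadratic bound $3t^2$ in the statement and the linear bound $t-1$ coming directly from Menger presumably absorbs any additional separator vertices needed in such edge cases.
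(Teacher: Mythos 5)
Your first case is fine, but the second case has a genuine gap, and it sits exactly where the content of the lemma lies. Two concrete problems. First, the explicit rerouting ``ascend to the top row, travel along it, descend to a nail'' cannot in general be made disjoint: if the source columns and the target nails are nested or crossing (e.g.\ $v_1$ in column $1$ heading to the nail in column $50$ while $v_2$ in column $30$ heads to the nail in column $51$), the two horizontal segments on the single top row necessarily intersect. Routing $t$ paths disjointly through a wall requires using $t$ different rows (or some other genuine linkage argument), not one shared row, and ``processing left to right'' does not repair this. Second, and more fundamentally, the dichotomy you set up is too coarse: $t$ disjoint $A$--$V(W)$ paths do not imply $t$ disjoint paths nicely linking to any subwall. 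All $t$ terminal vertices $v_i$ could lie in the interior of a single subdivided edge of $W$; that edge has only two exits, so at most two of the paths can be extended disjointly into the rest of the wall, and no choice of $W_1$ or of target nails rescues the remaining $t-2$. The slack between $t-1$ and $3t^2$ does not ``absorb'' this, because in that configuration there need not be any small $A$--$W_1$ separator either (other attachments of $A$ to $W$ may exist).

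The paper avoids both problems by a different two-stage argument. It first applies Menger with target the set $B$ of \emph{branch vertices} of a $t$-contained subwall $W_0$, asking for $3t^2$ paths, chosen to minimize the number of edges used outside $W_0$; this minimality forces each path to end the moment it touches a branch vertex and confines its interaction with $W_0$ to subdivided edges incident with its own endpoint, so one can pick an $r$-subwall $W_1$ untouched by all $3t^2$ paths. It then does \emph{not} construct the linkage by hand: it runs a second Menger argument in $H-(W_1-N_1)$ with target the nails $N_1$, showing that any candidate separator $Y$ with $|Y|\le t-1$ misses a cycle of $W-W_0$, at least $2t^2$ of the paths, a row or column of $W$ reaching their endpoints, and a vertical path into $N_1$, which together yield an $A$--$N_1$ path avoiding $Y$. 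The quadratic bound $3t^2$ is what makes this counting work; it is structural, not slack. To fix your proof you would need to import both ideas: target branch vertices with many more than $t$ paths under a minimality condition, and replace the explicit top-row rerouting with a Menger argument aimed at $N_1$ inside $H-(W_1-N_1)$.
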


\begin{proof}
Let $W_0$ be a $t$-contained subwall of size at least $2tr$ of $W$, and let $B$ be the 
set of branch vertices of $W_0$. Assume first that there is a vertex set $X$
of size $|X|\leq 3t^2-1$ that separates $A$ from $B$. As $W_0$ has size at least $ 2t r$,
it follows that $W_0$ has $4t^2$ disjoint subwalls of size $r$. One of these, $W_1$ say, is disjoint from $X$.
Then $W_1$ is separated from $A$ by $X$.

Thus, assume that there is no such set $X$. By Menger's theorem, we find a set $\mathcal Q_1$
of $3t^2$ $A$--$B$~paths. 
Choose $\mathcal Q_1$ such that $\sum_{Q\in\mathcal Q_1}|E(Q)\sm E(W_0)|$ is minimal.
Denote by $B_1$ the set of endvertices in $W_0$
of the paths in $\mathcal Q_1$.

By choice of $\mathcal Q_1$, whenever a path $Q\in\mathcal Q_1$ meets
a branch vertex of $W_0$ it ends there; and if a path $Q\in\mathcal Q_1$ meets a subdivided edge 
in its interior then at least one of its endvertices, which are branch vertices, must be in $B_1$. 
(Otherwise, we can reroute.)
Thus, if a subwall of $W_0$ is disjoint from $B_1$ then it is disjoint from any path in $\mathcal Q_1$. 

Since $W_0$ has size at least $2tr$ there is a subwall $W_1$ 
of size $r$ that is disjoint from $B_1$, and then from any path in $\mathcal Q_1$. Let $N_1$ be the set of nails
of $W_1$ (with respect to $W$). 
We claim that $A$ cannot be separated from $N_1$ in $G-G-(W_1-N_1)$ by fewer than $t$ vertices. 

For this, 
let $Y$ be a vertex set in $H-(W_1-N_1)$ of size at most $t-1$. 
As $W_0$ is $t$-contained in $W$
there is a cycle $C$ in $W-W_0$ that is disjoint from $Y$.
Moreover, as $|\mathcal Q_1|= 3t^2$, 
there is a subset $\mathcal Q_2$ of $\mathcal Q_1$ of size at least $2t^2$ that is disjoint from $Y$. 
Let $B_2$ be the set of endvertices in $W_0$ of the paths in $\mathcal Q_2$. 

Each vertex in $B_2$ lies in a horizontal and 
in a vertical path of $W_0$, and at most two lie in the same pair of vertical and horizontal path.
Since, moreover,  $|B_2|\geq 2t^2$ there is a horizontal or vertical path $P$ of $W$
that meets some vertex from $B_2$ and is disjoint from $Y$. Note that $P$ has a $B_2$--$C$~subpath $P'$
that is disjoint from $W_1$. Finally, there are at least $r\geq t$ disjoint subpaths of vertical paths of $W$
that link $C$ to $N_1$. One of these, $R$ say, is disjoint from $Y$. Then $\mathcal Q_2$, $P'$ and $R$
show that $N_1$ cannot be separated from $A$ in $H-(W_1-N_1)$ by fewer than $t$ vertices. 
Thus, by Menger's theorem, there is 
a set $\mathcal P$ of $t$ disjoint paths that nicely links $A$ to $W_1$. 
\end{proof}

\begin{lemma}\label{linklem2}
Let $r,t$ be positive integers such that 
$r\geq 3t^2$, and 
let $W$ be a wall of size at least $4tr$ in $G-A$ 
such that its induced tangle $\mathcal T_W$ in $G-A$ is a truncation of $\eptan$.
Then, there is an $r$-subwall $W_1$ of $W$ and a set of $t$ disjoint $A$--$W_1$~paths 
that nicely link to $W_1$.
\end{lemma}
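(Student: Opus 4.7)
The plan is to apply Lemma~\ref{linklem} to $G$, the set $A$, and the wall $W$ with parameters $r$ and $t$, and to rule out the separator alternative by exhibiting a hitting set small enough to contradict~\eqref{counterG}. Lemma~\ref{linklem} yields an $r$-subwall $W_1$ of $W$ together with either (a) a vertex set $X\subseteq V(G)$ with $|X|<3t^2$ separating $A$ from $W_1$ in $G$, or (b) the desired set of $t$ disjoint paths nicely linking $A$ to $W_1$. Case (b) is exactly the conclusion, so I assume (a) and aim for a contradiction.

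From $X$ I would manufacture a separation $(C,D)$ of $G-A$ that places $W_1$ on the $D$-side. Let $R$ be the union of the components of $G-X$ containing some vertex of $A\setminus X$, let $S=V(G)\setminus(R\cup X)$, and set $X'=X\setminus A$. Defining $V(C)=(R\setminus A)\cup X'$ and $V(D)=S\cup X'$ yields a separation of $G-A$ of order $|X'|\leq|X|<3t^2\leq r$. Since $W_1$ is disjoint from $A$ and lies on the $W_1$-side of the separator, every horizontal path of $W_1$ is contained in $D$, so $(C,D)\in\mathcal T_{W_1}$. By Lemma~\ref{subwalltangle}, $\mathcal T_{W_1}$ is a truncation of $\mathcal T_W$, which is a truncation of $\eptan$ by hypothesis; hence $(C,D)\in\eptan$.

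The heart of the argument is to show that $X$ meets every zero $A$-path of $G$. Let $P$ be such a path. By Lemma~\ref{largetanglem} applied to $(C,D)\in\eptan$, $P$ intersects $V(D)\setminus V(C)=S$; traversing $P$ from one endpoint, consider the first edge $uv$ with $v\in S$. The predecessor $u$ lies in $A$ or in $V(G-A)\setminus S=(R\setminus A)\cup X'$. If $u\in R\setminus A$, then $uv$ survives in $G-X$ and forces $v\in R$, contradicting $v\in S$. If $u\in A$, then $u$ is an endpoint of $P$ (since $P$ has no internal vertex in $A$); and if additionally $u\notin X$, then $u\in A\setminus X\subseteq R$, and the same survival argument in $G-X$ again forces $v\in R$. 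Hence $u\in X$, so $P$ meets $X$.

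It then remains only to close the numerical loop. Since $\mathcal T_W$ is a truncation of $\eptan$, the size of $W$ is at most $g(k)$, so $4tr\leq g(k)$; combining with $r\geq 3t^2$ gives $12t^3\leq g(k)$, and using $f(k)\geq 3g(k)$ from~\eqref{choicef} we obtain $3t^2\leq f(k)$, whence $|X|<f(k)$, contradicting the fact from~\eqref{counterG} that $G$ admits no hitting set of size at most $f(k)$. The main obstacle of the proof is the case analysis above, which must accommodate the possibility that the separator intersects $A$; it is resolved by using that an $A$-path has no internal vertex in $A$, so any vertex of $A$ encountered along $P$ must be an endpoint, which the survival-in-$G-X$ argument then pins down inside $X$.
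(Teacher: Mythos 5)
Your proof is correct and follows essentially the same route as the paper's: invoke Lemma~\ref{linklem}, and in the separator case build the reachability separation $(C,D)$ of $G-A$ from $X$, place it in $\eptan$ via $\mathcal T_{W_1}\to\mathcal T_W\to\eptan$, and contradict Lemma~\ref{largetanglem}. The only (harmless) difference is the closing step: the paper directly notes that the zero $A$-path guaranteed in $G[A\cup(D-C)]$ cannot exist because $X$ separates $A$ from $D-C$, whereas you use the first assertion of Lemma~\ref{largetanglem} to conclude that $X$ is a hitting set of size less than $3t^2\leq f(k)$ and contradict~\eqref{counterG}; both arguments are valid.
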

\begin{proof}
Applying Lemma~\ref{linklem} yields a subwall $W_1$ of $W$ of size~$r$.
{Suppose}
there is a set $X$ of fewer than $3t^2$ vertices that separates $W_1$ from $A$ in $G$. 
We construct a separation of $G-A$ by putting all vertices that
 are reachable from $A$ in $G-X$ into $C'$ and those that are not into $D'$. 
Set $D=G[D' \cup X]-A$ and $C=G[C' \cup X]-A$. 
Since the size of $W_1$ is $r \geq 3t^2 > |X|$, it follows that either $(C,D)$ or $(D,C)$ 
lies in the tangle $\mathcal T_{W_1}$ that $W_1$ induces in $G-A$.
As $r\geq 3t^2$, some horizontal path of $W_1$ is disjoint from $X$ and thus lies in $D'$, 
as $X$ separates $A$ from $W_1$. Thus $(C,D)\in\mathcal T_{W_1}$, which implies $(C,D)\in\mathcal T_W$
and then $(C,D)\in\eptan$.
Lemma~\ref{largetanglem}, however, states that some zero $A$-path is contained in 
 $G[A \cup (D-C)]$, which is impossible as $X$ separates $A$ from $D-C$. 

This contradiction shows that the subwall $W_1$ given by Lemma~\ref{linklem} comes 
with a set of $t$ disjoint $A$--$W_1$~paths that nicely link to $W_1$.
\end{proof}

\begin{lemma}\label{nonzerowallminor}
Let $W$ be a wall of size $2600k^3$ in $G-A$
such that its bricks are odd cycles and such that $\mathcal T_W$ is a 
 truncation of $\eptan$. 
Then there are $k$ disjoint zero $A$-paths.
\end{lemma}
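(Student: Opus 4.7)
The plan is to extract from the wall $k$ disjoint windmills whose tips lie in $A$, and then invoke Lemma~\ref{clawlem} on each of them. The odd bricks of $W$ will serve as the odd cycles inside the windmills, so the real work is to route three $A$-paths into a common centre through the wall. To obtain the required linkage, I would apply Lemma~\ref{linklem2} with $t=3k$ and $r$ of order $k^2$, chosen so that the two hypotheses $r\geq 3t^2$ and $4tr\leq 2600k^3$ are simultaneously satisfied (with $t=3k$ we have $4tr=12k\cdot r$, so any $r$ in the range $[27k^2,\,216k^2]$ works). This produces a subwall $W_1$ of $W$ of size $r$, whose bricks are still odd, together with a set $\mathcal L$ of $3k$ pairwise disjoint paths nicely linking $A$ to $3k$ distinct nails of $W_1$, with each such path meeting $W_1$ only in its endvertex.

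Order the $3k$ used nails from left to right along the top row of $W_1$ and partition them into $k$ consecutive triples. Since $r\gg k$, I would slice $W_1$ into $k$ pairwise disjoint vertical column-strips $W_1^{(1)},\ldots,W_1^{(k)}$, each itself a subwall of $W_1$ of width at least, say, $10$, and each containing on its top row exactly the three nails of one triple. In the $i$th strip, I would use the high connectivity of a wall to find three internally disjoint paths $Q_1,Q_2,Q_3$ joining these three nails to a common interior vertex $x$ of the strip. Extending each $Q_j$ by the linkage path $L_j\in\mathcal L$ ending at the corresponding nail yields three paths $P_j=L_j\cup Q_j$ that meet only at $x$ and whose other endvertices lie in $A$. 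The strip is roomy enough to further select, for each $j$, a brick $C_j$ of $W_1$ that shares an edge with $Q_j$ and is disjoint from $\bigcup_{j'\neq j}(P_{j'}\cup C_{j'})$. By hypothesis every such brick is odd, so $P_1\cup P_2\cup P_3\cup C_1\cup C_2\cup C_3$ is a windmill whose tips lie in $A$.

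Since the $k$ strips are pairwise disjoint and the linkage paths in $\mathcal L$ are pairwise disjoint, the $k$ windmills obtained in this way are pairwise disjoint, and Lemma~\ref{clawlem} then produces $k$ pairwise disjoint zero $A$-paths. I expect the main technical obstacle to lie in the simultaneous choice of the paths $Q_j$ and the bricks $C_j$ inside a single strip, where one must verify that all six objects together satisfy the disjointness requirements of the windmill definition. A convenient way to organise this is to route each $Q_j$ along its own dedicated vertical path of the strip, let the three vertical paths meet only along a bottom horizontal subpath terminating at $x$, and then mount $C_j$ on a brick of the strip adjacent to the dedicated vertical path of $Q_j$ at a height that no other $Q_{j'}$ or $C_{j'}$ reaches; the modest constant width of each strip is sufficient to keep this configuration tidy.
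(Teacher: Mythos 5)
Your overall strategy---build $k$ disjoint windmills with tips in $A$ from the odd bricks of the wall and finish with Lemma~\ref{clawlem}---is exactly the paper's, and the invocation of Lemma~\ref{linklem2} with $t=3k$ and $r$ of order $k^2$ is numerically fine. The gap is in the strip decomposition. Lemma~\ref{linklem2} gives you no control over \emph{which} $3k$ nails the linkage paths end at; they may well be $3k$ consecutive nails of the top row of $W_1$. A vertical column-strip that contains exactly the three nails of one triple on its top row and is disjoint from the strips of the other triples then spans only the few columns between the leftmost and rightmost nail of its triple, so the asserted ``width at least $10$'' is not available. Your construction inside a strip genuinely needs that room: if $Q_j$ runs down its own vertical path and $C_j$ is a brick incident with that vertical path, then $C_j$ also uses edges of a neighbouring vertical path, which in a strip of width about $3$ is occupied by some $Q_{j'}$ along its entire length; the windmill condition $C_i\cap(P_{j}\cup C_{j})=\emptyset$ for $j\neq i$ then fails, and there is no slack left to reroute.

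The paper sidesteps this in two ways. First, it requests $t=6k$ paths from Lemma~\ref{linklem2} and keeps only $3k$ of them, chosen so that the bricks containing their endnails are pairwise disjoint (consecutive nails lie in bricks that share an edge, so roughly every other path must be discarded). Second, instead of confining each windmill to its own narrow column-strip, it takes as odd cycle the brick $B_P$ at the endnail of each surviving path $P$, runs a connector $Q_P$ down a vertical path to the third horizontal path of $W_1$, and places the windmill centres there; disjointness is then immediate because the $B_P$ are pairwise disjoint and the $Q_P$ lie on distinct vertical paths. Your argument can be repaired along the same lines---e.g.\ ask for $30k$ paths and retain $3k$ whose endnails are pairwise far apart, so that the triples really do sit in wide, disjoint strips---but as written the width claim, on which the whole disjointness bookkeeping rests, is unjustified.
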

\begin{proof}
We start by using Lemma~\ref{linklem2} on $W$ with $t =6k$ and $r=3t^2$. 
As a result, we obtain a  subwall $W_1$ of $W$ of size {at least} $r>6k$
 and a set of $6k$ disjoint $A$--$W_1$~paths
that nicely link to $W_1$. By choosing a subset $\mathcal P$ of $3k$ of these paths we can ensure that 
the bricks $B_P$, for $P\in\mathcal P$, in which they end are pairwise disjoint. 
Moreover, as $W_1$ is a  subwall of $W$, its bricks are odd cycles as well. 

Let $Q$ be the third horizontal path in $W_1$ from the top. In particular, $Q$ is disjoint from each
of the bricks $B_P$, $P\in\mathcal P$. For each $P\in\mathcal P$ connect $B_P$ to $Q$ via a
subpath $Q_P$ of a vertical path, such that all $Q_P$ are pairwise disjoint, and also disjoint from all 
$B_{P'}$ for $P'\in\mathcal P$ with $P'\neq P$. Then, if we order the paths in $\mathcal P$
with respect to their endvertex on the top vertical path of $W_1$, 
for each three consecutive paths in $\mathcal P$, the union of the
sets $P\cup B_P\cup Q_P$ together with 
a subpath of $Q$ forms a windmill with tips in $A$
(note that $B_P$ is an odd cycle). That windmill, moreover, does not meet $A$
outside its tips as $W_1$ is disjoint from $A$. In this way, we obtain $k$ disjoint such windmills,
and then, by Lemma~\ref{clawlem}, $k$ disjoint zero $A$-paths.
\end{proof}

Now we deal with outcome~(b.ii) of Theorem~\ref{oddflatwallthm}. For this we first see that 
we can get rid of any interference between paths linking $A$ to a wall and a linkage of the wall. 

\begin{lemma}\label{twotypespaths}
Let $t$ be a positive integer, and let $H$ be a graph containing 
three vertex sets $A,B,X$.
If $H$ contains $2t$ disjoint $A$--$X$~paths $Q_1,\ldots, Q_{2t}$ and $t$ disjoint $B$--$X$~paths $R_1,\ldots,R_t$
then $H$ contains $2t$ disjoint paths $P_1,\ldots,P_{2t}$ such that $P_i$ is a $A$--$X$~path for $i\in [t]$
and a $B$--$X$~path for $i\in [2t]\sm [t]$.
Moreover, $\{P_1,\ldots,P_{t}\}\subseteq \{Q_1,\ldots,Q_{2t}\}$
and $P_i\subseteq \bigcup_{j\in[2t]}Q_j\cup \bigcup_{j\in[t]}R_j$ for all $i\in [2t]$.
\end{lemma}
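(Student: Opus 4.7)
The plan is to carry out the entire argument inside the subgraph $H' := \bigcup_{i=1}^{2t} Q_i \cup \bigcup_{j=1}^{t} R_j$, since every $P_i$ must lie in $H'$ by the containment requirement. I would reduce the statement to exhibiting $t$ pairwise disjoint $B$-to-$X$ paths $R^*_1,\dots,R^*_t$ inside $H'$ whose union meets at most $t$ of the paths $Q_1,\dots,Q_{2t}$. Given such a linkage, at least $t$ of the $Q_i$'s are wholly disjoint from $\bigcup_j R^*_j$; picking any $t$ of them to serve as $P_1,\dots,P_t$ and setting $P_{t+j}:=R^*_j$ for $j\in[t]$ yields the desired $2t$ paths, with $\{P_1,\dots,P_t\}\subseteq\{Q_1,\dots,Q_{2t}\}$ and every $P_i\subseteq H'$.

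To produce such an $\mathcal R^*$, I would consider the family of all $t$-tuples of pairwise disjoint $B$-to-$X$ paths in $H'$ (nonempty because $\{R_1,\dots,R_t\}$ is one such tuple) and pick $\mathcal R^*=\{R^*_1,\dots,R^*_t\}$ minimizing the integer-valued quantity
\[
\Phi(\mathcal R^*) \,:=\, \sum_{j=1}^{t}\bigl|\{\,i\in[2t]:V(R^*_j)\cap V(Q_i)\neq\emptyset\,\}\bigr|.
\]
The key claim is that at the minimum each $R^*_j$ meets at most one of the $Q_i$'s, so that $\Phi(\mathcal R^*)\leq t$ and hence the $R^*_j$'s collectively meet at most $t$ of the $Q_i$'s, which is exactly the reduction above.

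To prove the claim, suppose some $R^*_j$ meets two distinct paths $Q_{i_0}$ and $Q_{i_1}$. Tracing $R^*_j$ from its $B$-endpoint $b^*_j$, let $u$ be the first vertex of $R^*_j$ on $\bigcup_i V(Q_i)$, and say $u\in V(Q_{i_0})$. Form the \emph{shortcut} $R^{**}_j := b^*_j R^*_j u\, Q_{i_0}\, x_{i_0}^Q$, which meets only $Q_{i_0}$ among the $Q_i$'s. If $R^{**}_j$ is vertex-disjoint from every other $R^*_k$, then swapping $R^*_j$ for $R^{**}_j$ strictly decreases $\Phi$, contradicting the minimality. Otherwise some $R^*_k$ meets the new tail $u Q_{i_0} x_{i_0}^Q$ at a vertex $w$, and then $R^*_k$ itself already meets $Q_{i_0}$ at $w$; a two-path exchange interleaving $R^*_j$ and $R^*_k$ along $Q_{i_0}$ at a suitably chosen crossover---so that one of the two paths inherits the $Q_{i_0}$-tail to $x_{i_0}^Q$ while the other takes over the freed-up tail of its partner---produces a new pair of disjoint $B$-to-$X$ paths of strictly smaller $\Phi$-value, again contradicting minimality.

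The main obstacle is the two-path exchange that resolves conflicts along $Q_{i_0}$: one needs a careful case analysis depending on the relative positions of $u$ and $w$ along $Q_{i_0}$ and on how the remainder of $R^*_j$ after $u$ interacts with $R^*_k$, verifying in every case that the exchange yields two vertex-disjoint $B$-to-$X$ paths that are also disjoint from the remaining $R^*_\ell$'s and that strictly reduce~$\Phi$. This is the classical style of rerouting argument underlying Menger's theorem and appearing throughout the graph-minors literature; once it is in place, the lemma follows.
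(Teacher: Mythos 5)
Your overall architecture is the right one and matches the paper's: work inside $H'$, choose an extremal family of $t$ disjoint $B$--$X$ paths, show that only $t$ of the $Q_i$ can be touched, and assemble the final family from the untouched $Q_i$'s plus the new linkage. However, your key claim --- that a $\Phi$-minimizer has every path meeting at most one $Q_i$ --- is \emph{false}, so the ``two-path exchange'' you defer to cannot be carried out. Take $t=2$ and let $H'$ be exactly the union of $Q_1=a_1q_1sx_1$, $Q_2=a_2q_2x_2$, $Q_3=a_3x_3$, $Q_4=a_4x_4$, $R_1=b_1q_1q_2x_5$ and $R_2=b_2sx_6$ (so $R_1$ passes through $q_1\in Q_1$ and $q_2\in Q_2$, and $R_2$ passes through $s\in Q_1$). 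Every $B$--$X$ path in $H'$ starting at $b_2$ must begin with the edge $b_2s$ and hence contains $s$; consequently, in any pair of disjoint $B$--$X$ paths the $b_1$-path must avoid $s$, and the only $s$-avoiding $b_1$--$X$ paths in $H'$ are $b_1q_1q_2x_2$ and $b_1q_1q_2x_5$, each of which meets both $Q_1$ and $Q_2$. So \emph{every} family of two disjoint $B$--$X$ paths in $H'$ contains a path meeting two of the $Q_i$'s, and $\min\Phi=3>t$. No rerouting or exchange can repair this, because the target configuration does not exist. (Note that the lemma's conclusion still holds in this example --- only $Q_1,Q_2$ are touched, leaving $Q_3,Q_4$ free --- which shows that the correct invariant must be about \emph{which} $Q_i$'s are met in total, not about how many each individual linkage path meets.)

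The paper avoids this trap by using a different potential and a weaker, per-$Q_i$ invariant. It picks $\cS=\{S_1,\ldots,S_t\}$ minimizing $\sum_i|E(S_i)\sm E(\bigcup_{Q\in\cQ}Q)|$, i.e.\ the number of edges used outside the $Q$-paths, and proves: if some $S\in\cS$ meets $Q$ but no member of $\cS$ ends at the $X$-endvertex $x$ of $Q$, then taking $y$ to be the vertex of $\bigcup\cS$ on $Q$ \emph{closest to $x$ along $Q$} and replacing the tail of the path through $y$ by $yQx$ gives a new family (the appended segment is internally disjoint from all of $\cS$ by the choice of $y$, so no two-path exchange is ever needed) with strictly smaller potential. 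Hence every $Q_i$ met by $\cS$ donates its $X$-endvertex to a path of $\cS$, so at most $t$ of the $Q_i$ are met, and the proof finishes as you describe. If you want to salvage your write-up, replace your key claim and $\Phi$ by this invariant and potential; as it stands, the step you flag as ``the main obstacle'' is not a technicality but the point where the argument breaks.
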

\begin{proof}
Set $\cQ=\{Q_1,\ldots,Q_{2t}\}$ and $\cR=\{R_1,\ldots,R_{t}\}$, and
let $H'=\bigcup_{j\in[2t]}Q_j\cup \bigcup_{j\in[t]}R_j$.
Let $\cS=\{S_1,\ldots,S_t\}$ be $t$ disjoint $B$--$X$~paths in $H'$
such that 
$$\sum_{i=1}^t|E(S_i)\sm E\big( \bigcup_{Q\in\cQ}Q\big)|$$ 
is minimal.
Suppose there is a path $Q\in \cQ$ such that a path in $\cS$ intersects $Q$, 
but there is no path in $\cS$ which shares the endvertex $x\in X$ of $Q$.
Let $y$ be the first vertex from $x$ on $Q$ that belongs to a path in $\cS$; let $S\in\cS$
be the path containing $y$.
Let $S'$ be the path obtained from $S$ by deleting the subpath from $y$ to $X$ and adding $yQx$.
Then $(\cS\cup \{S'\})\sm \{S\}$ contradicts the choice of $S$.

Therefore, if $Q\in \cQ$ has a nonempty intersection with a path in $\cS$,
it also shares its endvertex in $X$ with a path in $\cS$.
Hence at most $t$ paths in $\cQ$ intersect a path in $\cS$
and so there exist $t$ paths in $\cQ$ that are disjoint from paths in $\cS$.
These paths together with $\cS$ give rise to the desired $2t$ paths.
\end{proof}

\begin{lemma}\label{pathnailslink}
Let $t\geq 2$ be a positive integer, and let {$H$} be a graph, let $A\subseteq {V(H)}$ and let
$W$ be a wall.
Let $\mathcal P$ be a set of $3t$ disjoint $A$--$W$~paths that nicely link to {$W$},
and let $\mathcal L$ be a linkage of $W$ of size $6t$.
Then there is a set $\mathcal P'$ of $t$ disjoint $A$--$W$~paths that nicely link to $W,$
and a subset $\mathcal L'$ of $\mathcal L$ of size $t$ such that the paths in $\mathcal P'\cup\mathcal L'$
are pairwise disjoint. Moreover, there is an edge $e$ in the outer cycle $C$ of $W$
such that the endvertices of the paths in $\mathcal P'$ precede the endvertices of the paths in $\mathcal L'$
in the path $C-e$.
\end{lemma}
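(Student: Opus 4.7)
The plan is to proceed in two stages: first, use a pigeonhole argument to pin down the edge $e$ together with two complementary arcs $I_P, I_L$ of the outer cycle $C$ (one to host the nail endpoints of $\mathcal P'$, the other to host the endpoints of the linkages in $\mathcal L'$), and then invoke Lemma~\ref{twotypespaths} to disentangle the paths of $\mathcal P$ and the linkages of $\mathcal L$ inside these arcs.

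For the first stage, I list the $3t$ nails where the paths in $\mathcal P$ end in cyclic order along $C$ as $p_1,\dots,p_{3t}$. For each $i$, I consider the arc $A_i$ of $C$ from $p_i$ to $p_{i+t-1}$ (indices read cyclically), which contains exactly $t$ endpoints of $\mathcal P$. A straightforward double count shows that each endpoint of a linkage in $\mathcal L$ lies in at most $t$ of these arcs, so $\sum_i |\{\text{$\mathcal L$-endpoints in } A_i\}| \le 12t\cdot t$. Averaging, some $A_{i^*}$ contains at most $4t$ endpoints of $\mathcal L$, and hence at most $4t$ linkages of $\mathcal L$ meet $A_{i^*}$ at all; so at least $6t-4t=2t$ linkages have both endpoints in the complementary arc. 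I set $I_P := A_{i^*}$ and $I_L := C\setminus A_{i^*}$, and pick $e$ to be an edge of $C$ at the boundary of $I_P$ and $I_L$, chosen so that in $C-e$ every nail of $I_P$ comes strictly before every nail of $I_L$.

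For the second stage, let $\mathcal P_0 \subseteq \mathcal P$ be the $t$ paths of $\mathcal P$ that end in $I_P$, and let $\mathcal L_0 \subseteq \mathcal L$ be $2t$ linkages with both endpoints in $I_L$. I intend to apply Lemma~\ref{twotypespaths} with the set $\{\ell_L : L \in \mathcal L_0\}$ in the role of the lemma's $A$, with the ambient set $A$ of the present lemma in the role of $B$, and with $X := N$: the $2t$ linkages of $\mathcal L_0$ serve as the $Q$-paths (each going from $\ell_L$ to $r_L \in X$) and the $t$ paths of $\mathcal P_0$ as the $R$-paths. The lemma then produces $t$ paths selected directly from $\mathcal L_0$---these will be our $\mathcal L' \subseteq \mathcal L$---together with $t$ further paths from the ambient $A$ to $N$, pairwise disjoint from $\mathcal L'$ and contained in $\bigcup\mathcal P_0 \cup \bigcup\mathcal L_0$. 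These $t$ additional paths will be the candidates for $\mathcal P'$.

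The hard part is forcing the nail endpoints of $\mathcal P'$ to land in $I_P$ rather than $I_L$: the output of Lemma~\ref{twotypespaths} only guarantees endpoints in $X=N$, and a priori a new path might switch onto a linkage in $\mathcal L_0 \setminus \mathcal L'$ and terminate at some $r_L \in I_L$. I plan to rule this out by repeating the rerouting argument in the proof of Lemma~\ref{twotypespaths} with an additional preference for endpoints in $I_P$: every new path begins in the ambient $A$ and must enter $\bigcup\mathcal L_0$ through a $\mathcal P_0$-segment, so whenever a new path would terminate in $I_L$ I can truncate it at its last intersection with a $\mathcal P_0$-path and continue along that $\mathcal P_0$-path to its original endpoint in $I_P$. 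The slack provided by $|\mathcal L_0| \ge 2t$ (only $t$ of which are consumed by $\mathcal L'$) is what allows such re-truncations to be carried out simultaneously while preserving pairwise disjointness with $\mathcal L'$, and this endpoint-constrained rerouting is what I expect to be the main technical challenge of the proof.
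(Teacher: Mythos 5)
Your two ingredients --- an averaging argument over arcs of the outer cycle, and Lemma~\ref{twotypespaths} --- are exactly the ones the paper uses, but you apply them in the wrong order, and the difficulty you flag at the end (the ``endpoint-constrained rerouting'' you call the main technical challenge) is a genuine, unresolved gap rather than a routine detail. Because you fix the arc $I_P$ \emph{before} disentangling, the $t$ paths that Lemma~\ref{twotypespaths} returns in place of $\mathcal P_0$ are only guaranteed to end somewhere in $N$; nothing prevents such a path from running along a linkage of $\mathcal L_0\setminus\mathcal L'$ and terminating at one of its endpoints, i.e.\ in $I_L$, which destroys the ``precede'' conclusion. Your proposed repair --- truncating such a path at its last meeting with a $\mathcal P_0$-path and riding that path back into $I_P$ --- is not carried out and is not obviously consistent: several new paths may compete for the same $\mathcal P_0$-tail, and the re-truncated paths must simultaneously remain pairwise disjoint and disjoint from $\mathcal L'$; the slack $|\mathcal L_0|\ge 2t$ does not by itself deliver this.

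The clean fix is simply to reverse your two stages, which is what the paper does. First apply Lemma~\ref{twotypespaths} to all of $\mathcal P$ (size $3t$) and $\mathcal L$ (size $6t$) with the nails in the role of $X$: this returns $3t$ linkage paths $\mathcal L_1\subseteq\mathcal L$ kept intact together with $3t$ new disjoint $A$--$N$ paths $\mathcal P_1$ that nicely link to $W$, all pairwise disjoint. Only \emph{then} pigeonhole on $C$: split the $3t$ endpoints of $\mathcal P_1$ (wherever they happen to lie) into three disjoint subpaths of $C$ containing $t$ of them each; every path of $\mathcal L_1$ has only two endpoints and hence avoids at least one of the three subpaths, so some subpath is avoided by at least $t$ linkages. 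Taking $\mathcal P'$ to be the $t$ paths ending in that subpath, $\mathcal L'$ to be $t$ linkages avoiding it, and choosing $e$ so that $C-e$ has that subpath as initial segment finishes the proof; done in this order, the endpoint-placement problem you struggle with never arises.
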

\begin{proof}
We apply Lemma~\ref{twotypespaths} to $\mathcal P$ and $\mathcal L$ with the set of nails of $W$
in the role of $X$. We obtain a subset $\mathcal L_1$ of $\mathcal L$ of size $3t$, and a set $\mathcal P_1$
of $3t$ $A$--$W$~paths that nicely link to $W$
such that the paths in $\mathcal L_1\cup\mathcal P_1$ are pairwise disjoint.

There are three disjoint subpaths $P_1,P_2,P_3$ of $C$ such that each contains the endvertices of $t$
of the paths in $\mathcal P_1$.
For $i=1,2,3$, let $J_i$ be the set of those paths  in $\mathcal L_1$
that do not have an endvertex in $P_i$. Pick $i$ such that $J_i$ is largest, which
implies $|J_i|\geq\tfrac{1}{3}|\mathcal L_1|=t$, and set $\mathcal L'=J_i$. We also choose
as $\mathcal P'$ the set of those paths in $\mathcal P_1$ with an endvertex in $P_i$.
Clearly, $|\mathcal P'|=t$.
To finish the proof, it remains to pick an edge $e$ of $C$ and an orientation of $C$ such that $C-e$
has $P_i$ as initial segment.
\end{proof} 

We also need a result of Thomassen that any large enough wall contains a large wall 
in which all subdivided edges have a length that is divisible by~$m$, for any fixed positive integer $m$.  

\begin{proposition}[Thomassen~\cite{Tho88}]\label{thoprop}
For all positive integers $m,\ell$ there is an integer $\thofun(\ell,m)$ such that every $\thofun(\ell,m)$-wall 
contains an $\ell$-wall such that all subdivided edges have length~$0$ modulo~$m$. 
\end{proposition}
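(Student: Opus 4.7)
The plan is to apply an iterated pigeonhole argument to the lengths of the subdivided edges of a sufficiently large wall $W$ of size $N$. For each horizontal path $H_i$ of $W$, record the distance $d^H_i(v)$ along $H_i$ from a fixed endpoint to each branch vertex $v$ on $H_i$, and color $v$ by $d^H_i(v) \bmod m$; do the same for each vertical path, producing a coloring $d^V_j(v) \bmod m$. Two branch vertices on a common horizontal or vertical path of $W$ are connected in $W$ by a subpath whose length is the difference of their colors, so the selected substructure has all its subdivided edges of length $0 \bmod m$ whenever each kept horizontal path induces a monochromatic restriction to the kept branch vertices on it, and similarly for the vertical paths.

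The core combinatorial step is to find a set $R$ of $\ell+1$ row indices and a set $C$ of $2\ell+2$ column indices such that, for each $i\in R$, the horizontal coloring $d^H_i$ is constant on $C$ modulo $m$, and for each $j\in C$, the vertical coloring $d^V_j$ is constant on $R$ modulo~$m$. This I would handle by a two-stage profile-based pigeonhole. Starting from an initial row set $R_0$ of size roughly $(\ell+1)m^{2\ell+2}$, I assign to each column its coloring profile in $\mathbb{Z}_m^{|R_0|}$; by pigeonhole some profile class contains at least $2\ell+2$ columns, provided $N$ is taken doubly exponential in $\ell$ and $m$, and from that class I harvest $C$. I then thin $R_0$ down to $R$ by the symmetric profile pigeonhole applied to the vertical colorings on $C$. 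The branch vertices indexed by $R\times C$, together with the connecting subpaths of $W$ along its horizontal and vertical paths, form the required $\ell$-wall with all subdivided edges of length $0$ modulo $m$.

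The main obstacle is verifying that the resulting substructure is genuinely an $\ell$-wall rather than merely a grid of paths: the vertical paths of an elementary wall zigzag in accordance with the brick pattern, so the chosen rows and columns must be compatible with this parity pattern in order for the vertical connections to land in the correct columns. I would resolve this by first trimming $W$ to a well-aligned inner subwall and then restricting the pigeonhole to column and row indices within a fixed parity class. This inflates the bound $\thofun(\ell,m)$ by only a constant factor while guaranteeing that the selected branch-vertex grid assembles into a valid subdivision of an elementary $\ell$-wall.
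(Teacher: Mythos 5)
The paper does not prove this proposition; it is imported wholesale from Thomassen~\cite{Tho88}, so there is no in-paper argument to compare yours against and I can only judge your proof on its own terms. Its combinatorial core is sound: colouring each branch vertex by its distance modulo~$m$ along its horizontal and its vertical path, extracting $C$ by a pigeonhole on column profiles in $\mathbb Z_m^{|R_0|}$, and then thinning $R_0$ to $R$ by the symmetric pigeonhole on row profiles over $C$ does produce a monochromatic $(\ell+1)\times(2\ell+2)$ pattern, and the resulting doubly exponential value of $\thofun(\ell,m)$ is harmless since only existence is claimed.

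The gap is in the assembly step, and it is slightly deeper than the alignment issue you flag. A vertical path of $W$ meets each interior horizontal path $H_i$ in \emph{two} branch vertices joined by a zigzag segment of $H_i$, and within a fixed row all columns of a fixed parity carry their rung of $W$ in the \emph{same} direction (all up or all down). A wall, however, needs the rung directions to alternate along each of its rows. So after restricting to a fixed parity class of columns, your representatives $v_{i_a,j_b}$ can serve as attachment points only for the rungs of one direction; the rungs of the opposite direction must attach at the \emph{other} endpoint of the zigzag segment of $V_{j_b}\cap H_{i_a}$, a vertex your profiles never look at. The horizontal segments between consecutive actual attachment points, and the rung paths between them, therefore differ from the lengths you certify by zigzag segments of uncontrolled length modulo~$m$, so monochromaticity of one representative per (row, column) pair does not yet give subdivided edges of length $0$ modulo~$m$ in the wall you actually build. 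The repair is routine --- record in each profile the horizontal and vertical colours of \emph{both} vertices of $V_{j}\cap H_{i}$, which only doubles the profile length and keeps the same shape of bound --- but as written the quantities shown to vanish modulo~$m$ are not the edge lengths of the constructed $\ell$-wall, so this step needs to be made explicit.
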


\begin{lemma}\label{pathlinkagetotop}
Let $k$ be some positive integer and $s=200k$.
Let $\plfun(k)$ be an integer with $\plfun(k) \geq \thofun(4s\cdot 2 \cdot 3s^2,4)+400k+2$.
If $G$ contains a bipartite  wall $W_0$
of size $\plfun(k)$ that has a  pure odd linkage $\mathcal L$ of size $48(k+1)$,
and if there is a set  $\mathcal P$ of $24(k+1)$ disjoint $A$--$W_0$~paths that nicely link to $W_0$
then there are $k$ disjoint zero $A$-paths.
\end{lemma}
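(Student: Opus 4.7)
The plan is to combine three structural features in tandem: the $4$-divisibility regularity of a sub-wall given by Proposition~\ref{thoprop}, the oddness of the linkage $\cL$ (which gives a handle on parity when routing through a linkage path), and the abundance of $A$-paths and linkage paths relative to $k$. Each of the $k$ zero $A$-paths will be built as $p_1\cup R\cup p_2$, where $p_1,p_2\in\cP$ are distinct with endpoints $n_1,n_2$ on nails of a regularised subwall, and $R$ is a route from $n_1$ to $n_2$ inside the wall together with a small number of linkage paths, whose length modulo~$4$ is chosen to cancel $|p_1|+|p_2|$ modulo~$4$.

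First, I would apply Proposition~\ref{thoprop} to extract inside $W_0$ a subwall $W$ of size $4s\cdot 2\cdot 3s^2$ in which every subdivided edge has length $\equiv 0\pmod 4$; consequently every path in $W$ between branch vertices has length $\equiv 0\pmod 4$. The slack of $400k+2=2s+2$ in the size of $W_0$ is there to leave a buffer of unused horizontal and vertical paths of $W_0\setminus W$ through which the paths of $\cP$ and the linkage $\cL$ can be extended so that they now end at nails of the smaller wall $W$ (at the cost of losing at most a constant fraction of each). I would then invoke Lemma~\ref{pathnailslink} to prune $\cP$ and $\cL$ to disjoint subcollections $\cP'\subseteq\cP$ and $\cL'\subseteq\cL$, each of size at least $8(k+1)$, with the endvertices of the paths in $\cP'$ preceding those of $\cL'$ along the outer cycle of $W$. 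Separating ``anchor'' endpoints from ``linkage'' endpoints along the top row of $W$ is what allows independent routing for each of the $k$ constructions.

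Next comes the length analysis modulo~$4$. By the regularity of $W$, the wall portion of any route contributes $\equiv 0\pmod 4$; by the odd-linkage hypothesis, each path $L\in\cL'$ has parity opposite to the wall-distance between its endpoints, and combined with the $4$-regularity this pins down the contribution modulo~$4$ of each insertion of $L$ into a route. By inserting $0$, $1$, or $2$ linkage paths one can realise any target residue in $\mathbb Z_4$, so for every pair $p_1,p_2\in\cP'$ and every prescribed residue there is, in principle, a correcting route. A pigeonhole on the residues $|p_1|\bmod 4$ across the $24(k+1)$ $A$-paths in $\cP$ gives enough pairs to form $k$ zero $A$-paths. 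The purity assumption on $\cL$ (in series, nested, or crossing) is exactly what makes the routes for the $k$ distinct pairings realisable disjointly: in each of the three cases the linkage paths come in a canonical order along the top row, so that $\cL'$ and the vertical/horizontal paths of $W$ can be carved into $k$ disjoint ``slices'' of sufficient size, one per pairing.

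The main obstacle I anticipate is Step~3, the simultaneous disjoint routing: ensuring that the $k$ routes $R$ are pairwise disjoint, that within each $R$ the wall portions and the (up to two) linkage paths of $\cL'$ are disjoint from the segments used by the other $R$'s, and that the correcting residue is achievable with the specific linkage paths allocated to that slice. Handling the crossing case will be the most delicate, since there one typically needs two linkage paths to perform a ``parity twist'' and one must verify that in the crossing pattern the two can be chosen disjointly from the slices used by the other pairings; the size estimate $4s\cdot 2\cdot 3s^2$ with $s=200k$ is exactly tuned to leave that much slack.
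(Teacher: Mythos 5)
Your plan follows the same route as the paper: regularise a subwall via Proposition~\ref{thoprop}, push $\cP$ and $\cL$ onto that subwall, prune with Lemma~\ref{pathnailslink}, and then do mod-$4$ arithmetic in which wall segments between branch vertices contribute $0$ and odd linkage paths supply the correction. But two steps that you treat as routine (or explicitly leave open) are exactly where the work lies, so as it stands the argument has gaps. First, the extension of $\cP$ and $\cL$ to the nails of the regularised wall is not achieved by ``a buffer of unused horizontal and vertical paths'': Proposition~\ref{thoprop} only guarantees a wall \emph{contained} in $W_0$, not a subwall in the positional sense, so there is no canonical way to route along rows and columns of $W_0$ to its nails. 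The paper instead passes through an intermediate $200k$-contained subwall $W_1$, takes its nails $A_1$ as a staging set, applies Lemma~\ref{linklem} (Menger) to $A_1$ and the Thomassen wall, and must then explicitly refute the small-separator outcome of that lemma by exhibiting, inside $W_1$, a path from $A_1$ to a branch vertex of the regular wall avoiding any set of fewer than $3s^2$ vertices. Without some such argument your extension step does not go through.

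Second, the ``simultaneous disjoint routing'' that you flag as the main obstacle is not uniformly hard across the three purity types and does not require per-slice allocation of the wall; the paper resolves it by a short combinatorial bookkeeping that you should supply. After Lemma~\ref{pathnailslink}, all linkage paths have odd length (because the wall is bipartite and all nails of the regular subwall lie in one colour class), so pigeonholing gives $2(k+1)$ of them with the same residue $1$ or $3$; their ``second'' endpoints (with the in-series case relabelled alternately) cut the top row into $k+1$ disjoint intervals $b_{2j-1}Qb_{2j}$, at most one of which meets an endpoint of $\cP_2$ and is discarded. Pigeonholing $\cP_2$ gives $2k$ stubs of equal residue; if that residue is even, consecutive pairs are joined directly through the top row ($0 \bmod 4$), and if odd, each pair is routed through two consecutive linkage paths (contributing $1+1$ or $3+3\equiv 2 \bmod 4$). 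Note also that your claim that inserting $0$, $1$, or $2$ linkage paths realises \emph{any} residue in $\mathbb Z_4$ is imprecise --- with a single insertion you only obtain the one residue ($1$ or $3$) shared by the surviving linkage paths --- but this is harmless, since the two stubs of a pair always have equal residue and hence only corrections of $0$ or $2$ are ever needed.
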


\begin{proof}
Let $W_1$ be a $200k$-contained subwall of $W_0$ of size $\plfun(k)-400k$. Let $A_1$ be the set of
nails of $W_1$ and let $W_2$ be the $1$-contained subwall of $W_1$ of size $\plfun(k)-400k-2$.
By Proposition~\ref{thoprop}
and choice of $\plfun(k)$,
$W_2$ contains a wall $W_3$ of size $4s\cdot 2\cdot 3s^2$ such that all subdivided edges of $W_3$ 
have length~$0$ modulo~$4$. Note that $A_1$ is disjoint from $W_2$ and therefore also $W_3$.
We use Lemma~\ref{linklem} on $A_1$ and $W_3$ and thus find a subwall $W_4$ of $W_3$ of size $2\cdot 3s^2$ such that
there is either a set of $s$ disjoint paths from $A_1$ to the nails of $W_4$ or a set $X$ of
fewer than $3s^2$ vertices that separate $A_1$ from $W_4$. 

Assume there is such a separator $X$. 
Each branch vertex of $W_4$, with the possible exception of the vertices of degree $2$, is a branch vertex in $W_2$. 
Since $W_4$ has size $2\cdot 3s^2$ and since $|X|<3s^2$,
 we find a horizontal or a vertical path $P$ of $W_1$ that is 
disjoint from $X$ such that at least one branch vertex of $W_4$ is contained in this path.
Since the number of vertices in $A_1$ is larger than $3s^2$,
 we also find a path starting in a vertex of $A_1$
and ending at the bottom of the wall $W_1$ (an extension of a vertical path of $W_1$) that is disjoint from $X$. Either it intersects the path $P$ or we find a horizontal path of $W_1$
that intersects both these paths and is also disjoint from $X$. In any case we obtain a path from $A_1$ to a branch vertex of $W_4$ that is disjoint from $X$ --- this contradicts that $X$ separates $A_1$ from $W_4$.
Thus, Lemma~\ref{linklem} yields a set $\mathcal R$ of $s$ disjoint 
paths starting in $A'_1 \subseteq A_1$ that nicely link to $W_4$.
Note that, by applying the lemma in $W_1$, we can ensure that each path in 
$\mathcal R$ is contained in~$W_1$.

Recall that the linkage $\mathcal L$ has size $48(k+1)$, and that thus 
the set of endvertices of the paths in $\mathcal L$ has cardinality $2\cdot 48(k+1)\leq 200k=s$.
As $W_1$ is $200k$-contained in $W_0$
we can extend the pure linkage $\mathcal L$
through $W_0$ to a pure linkage of $W_1$ such that all endvertices are in $A'_1$.
We  now use the $A'_1$--$W_4$~paths in $\mathcal R$ to extend the linkage of $W_1$ to one
of $W_4$. We denote the linkage by~$\mathcal L_1$, and observe that $\mathcal L_1$
is still an odd linkage as $W_0$ is bipartite and as every path we used 
to extend the paths in $\mathcal L$ is contained in $W_0$. 
 
Moreover, as all branch vertices of $W_4$ are in the same bipartition class, we deduce
that every path in the linkage {$\mathcal L_1$} has odd length. 
In a similar way, we extend $\mathcal P$ first through $W_0$ and then via the paths in $\mathcal R$
to a  set $\mathcal P_1$ of $24(k+1)$ disjoint $A$--$W_4$~paths
that nicely link to $W_4$.

Let $Q$ denote the top row of $W_4$. 
Next, we apply Lemma~\ref{pathnailslink} to $\mathcal P_1$ and $\mathcal L_1$ with $t=8(k+1)$.
We obtain a set $\mathcal P_2$ of $8(k+1)$ disjoint $A$--$W_4$~paths that nicely link to $W_4$ and
a subset $\mathcal L_2\subseteq\mathcal L_1$ of size $4(k+1)$ such that the paths in $\mathcal P_2\cup\mathcal L_2$
are pairwise disjoint. Moreover, there is a subpath of $Q$ that contains all endvertices of the
paths in $\mathcal P_2$ but no endvertex of any path in $\mathcal L_2$. 

Now, as the paths in $\mathcal L_2$ have odd length, there are at least $2(k+1)$
paths in $\mathcal L_2$ that have the same length modulo~$4$, namely either~$1$ or~$3$. 
Let these linkage paths be $\mathcal L_3=\{L_1,\ldots, L_{2(k+1)}\}$, where we assume the paths to be ordered
according to the order of their first endvertex on the top row $Q$.
If $\mathcal L_3$ is crossing or nested, then for $i=1,\ldots, 2(k+1)$, denote the first 
endvertex of $L_i$ on $Q$ by $a_i$ and the other endvertex by $b_i$. If $\mathcal L_3$ 
is in series, then let the endvertices of $L_i$ be $a_i$ and $b_i$, where we choose them 
such that $a_i$ is the first endvertex on $Q$ if and only if $i$ is odd. 
In both cases the subpaths $b_{2j-1}Qb_{2j}$ for $j=1,\ldots, k+1$ are disjoint. 
In particular, at most one of these subpaths may contain an endvertex of any path in $\mathcal P_2$.
If that happens, for $j^*$ say, we delete the two paths $L_{2j^*-1}$ and $L_{2j^*}$ from $\mathcal L_3$.
To keep notation simple, we rename, in that case, the remaining paths in $\mathcal L_3$
so that $\mathcal L_3$ consists of $L_1,\ldots, L_{2k}$, ordered according to their endvertices in $Q$.

We get:
\begin{equation}\label{disjQ}
\begin{minipage}[c]{0.8\textwidth}\em
for $j=1,\ldots, k$,  the only endvertices of any path in $\mathcal L_3\cup \mathcal P_2$
contained in $b_{2j-1}Qb_{2j}$ are $b_{2j-1}$ and $b_{2j}$.
\end{minipage}\ignorespacesafterend 
\end{equation} 

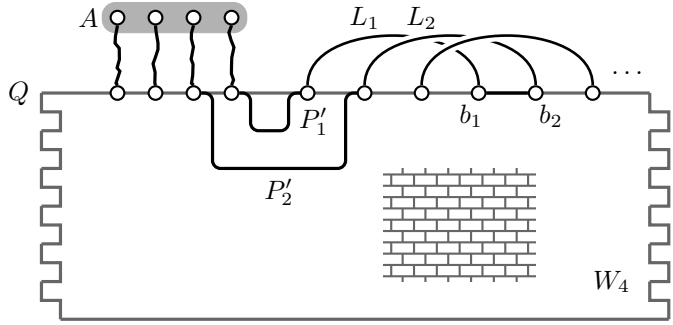
\begin{figure}[ht]
\centering
\begin{tikzpicture}
\tikzstyle{hvertex}=[thick,circle,inner sep=0.cm, minimum size=1.8mm, fill=white, draw=black]
\tikzstyle{wed}=[hedge,color=dunkelgrau]
\tikzstyle{jumps}=[ultra thick, white, double distance=1pt, double=black,bend left=90]

\def\bh{0.25}
\def\downhook{-- ++(0,-\bh) -- ++(\bh,0) -- ++(0,-\bh) -- ++(-\bh,0)}
\def\lasthook{-- ++(0,-\bh) -- ++(\bh,0) -- ++(0,-\bh)}
\def\wallh{6}
\def\rightx{8}

\begin{scope}[on background layer]

\foreach \j in {2,...,\wallh}{
  \draw[wed] (0,-\j*2*\bh+4*\bh) \downhook;
  \draw[wed] (\rightx,-\j*2*\bh+4*\bh) \downhook;
}
\draw[wed] (0,-\wallh*2*\bh+2*\bh) \lasthook;
\draw[wed] (\rightx,-\wallh*2*\bh+2*\bh) \lasthook;

\draw[wed] (0,0) -- (\rightx,0);
\draw[wed] (\bh,-\wallh*2*\bh) -- (\rightx+\bh,-\wallh*2*\bh);
\end{scope}

\foreach \i in {0,1,2,3,4,5}{
  \node[hvertex] (z\i) at (3.5+\i*3*\bh,0) {};
}

\draw[jumps] (z0) to (z3);
\draw[jumps] (z1) to (z4);
\draw[jumps] (z2) to (z5);

\draw[color=hellgrau,fill=hellgrau,rounded corners=5pt] (0.8,0.8) rectangle (1.2+6*\bh,1.2);	

\foreach \i in {0,1,2,3}{
  \node[hvertex] (a\i) at (1+\i*2*\bh,1) {};
  \node[hvertex] (b\i) at (1+\i*2*\bh,0) {};
  \draw[pathedge] (a\i) to (b\i);
}

\draw[hedge] (z3) -- (z4);

\node[label=above right:$\ldots$] at (z5){};
\node[label=left:$A$] at (a0){};
\node at (\rightx-0.5,-\wallh*2*\bh+0.5) {$W_4$};
\node at (3.5+3*\bh,1) {$L_1$};
\node at (3.5+6*\bh,1) {$L_2$};
\node at (-0.3,0) {$Q$};

\path (z3) ++(-0.1,-0.3) node {$b_1$};
\path (z4) ++(0.2,-0.3) node {$b_2$};

\begin{scope}[on background layer]
  \draw[hedge, rounded corners=3pt] (b3) -- ++(\bh,0) -- ++(0,-0.5) -- ++(2.5-8*\bh,0) -- ++ (0,0.5) node[near start, auto,swap] {$P'_1$} -- ++(\bh,0);
  \draw[hedge, rounded corners=3pt] (b2) -- ++(\bh,0) -- ++(0,-1) -- ++(2.5-3*\bh,0) node[midway,auto,swap]{$P'_2$} -- ++ (0,1) -- ++(\bh,0);
\end{scope}

\clip[] (4.5,-2.5) rectangle (6.5,-1);

\begin{scope}[shift={(4,-2.58)}]
\tikzstyle{wed}=[thick,color=dunkelgrau]
\def\wallheight{12}
\def\brickheight{0.15}

\pgfmathtruncatemacro{\lastrow}{\wallheight}
\pgfmathtruncatemacro{\penultimaterow}{\wallheight-1}
\pgfmathtruncatemacro{\lastrowshift}{mod(\wallheight,2)}
\pgfmathtruncatemacro{\lastx}{2*\wallheight+1}

\draw[wed] (\brickheight,0) -- (2*\wallheight*\brickheight+\brickheight,0);
\foreach \i in {1,...,\penultimaterow}{
  \draw[wed] (0,\i*\brickheight) -- (2*\wallheight*\brickheight+\brickheight,\i*\brickheight);
}
\draw[wed] (\lastrowshift*\brickheight,\lastrow*\brickheight) to ++(2*\wallheight*\brickheight,0);

\foreach \j in {0,2,...,\penultimaterow}{
  \foreach \i in {0,...,\wallheight}{
    \draw[wed] (2*\i*\brickheight+\brickheight,\j*\brickheight) to ++(0,\brickheight);
  }
}
\foreach \j in {1,3,...,\penultimaterow}{
  \foreach \i in {0,...,\wallheight}{
    \draw[wed] (2*\i*\brickheight,\j*\brickheight) to ++(0,\brickheight);
  }
}
\end{scope}

\end{tikzpicture}
\caption{How the zero $A$-paths are pieced together in the proof of Lemma~\ref{pathlinkagetotop}}\label{castlefig}
\end{figure}

Of the paths in $\mathcal P_2$ at least $2k$ have the same length modulo~$4$.
Let the set of these be $\mathcal P_3=\{P_{1},\ldots, P_{2k}\}$, and assume them to 
be ordered according to their endvertices $p_{1},\ldots,p_{2k}$ on $Q$. 
If the paths in $\mathcal P_3$
have length~$0$ or length~$2$ modulo~$4$ then for $j=1,\ldots, k$ the 
path $Q_j=P_{2j-1}p_{2j-1}Qp_{2j}P_{2j}$ has length~$0$ modulo~$4$: indeed, as 
$p_{2j-1}$ and $p_{2j}$ are nails of $W_4$ it follows that $p_{2j-1}Qp_{2j}$ has length~$0$
modulo~$4$. As, moreover, the paths $Q_1,\ldots, Q_k$ are all pairwise disjoint, we 
have found $k$ disjoint zero $A$-paths.

Thus, assume the paths in $\mathcal P_3$ to have length~$1$ or~$3$ modulo~$4$.
Following the outer cycle in the right direction (right with respect to the top row) let $a_\ell$ be the first vertex after $p_{2k}$. We start by relabelling this vertex $a_\ell$ (and the respective linkage path) as $a_{2k}$ and then relabel all following vertices $a_i$ with decreasing index such that we get $a_{2k}, \ldots, a_1$ if we follow the outer cycle.

We extend each path $P_i$ in $\mathcal P_3$ through $W_4$ to an $A$--$a_i$~path $P'_i$
and note that, as $W_4$ has size $10k$,
 we can do that in such a way that $P'_1,\ldots,P'_{2k}$ are pairwise disjoint,
such that they meet none of the subpaths $b_{2j-1}Qb_{2j}$ for $j\in\{1,\ldots,k\}$, 
and such that they also meet the paths of $\mathcal L_3$ only in their endvertices. 
By choice of $W_2$ the paths $P'_i$ still all have the same length modulo~$4$, namely~$1$
or~$3$. 
Define $Q_j$ as 
\[
Q_j=P'_{2j-1}a_{2j-1}L_{2j-1}b_{2j-1}Qb_{2j}L_{2j}a_{2j}P'_{2j},
\]
and observe that $Q_j$ is a zero $A$-path  as the sum of the lengths of $L_{2j-1}$ and $L_{2j}$
is~$2$ modulo~$4$. See Figure~\ref{castlefig} for an illustration. 	
By choice of the $P'_i$ and and by~\eqref{disjQ},
the paths $Q_j$ are pairwise disjoint, and thus disjoint zero $A$-paths.
\end{proof}

\begin{lemma} \label{linkagewallminor}
There is an integer $\bfun(k)$ such that: if $W_0$ is a bipartite  wall in $G-A$
of size at least $\bfun(k)$ whose induced tangle is a truncation of \eptan, and 
that has a pure odd linkage $\mathcal L$ of size at least $\frac{\bfun(k)}{100}$ then 
$G$ contains  $k$ disjoint zero $A$-paths.
\end{lemma}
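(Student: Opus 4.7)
The plan is to reduce to Lemma~\ref{pathlinkagetotop}: that lemma, given a bipartite wall of size $\plfun(k)$ with both a pure odd linkage of size $48(k+1)$ and a set of $24(k+1)$ disjoint $A$-paths nicely linking to the wall, produces $k$ disjoint zero $A$-paths. The linkage is already given on $W_0$, but the $A$-paths must come from Lemma~\ref{linklem2}, which only produces them on a \emph{subwall} of $W_0$. We therefore also need to transfer the given linkage to that subwall.

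Choose $\bfun(k)$ so that $\bfun(k)/100\geq 48(k+1)$ and so that Lemma~\ref{linklem2} applies with parameters $t=100(k+1)$ and $r=\plfun(k)$, e.g., $\bfun(k)\geq 400(k+1)\plfun(k)$. Applying the lemma yields a subwall $W_1$ of $W_0$ of size at least $\plfun(k)$ that is $100(k+1)$-contained in $W_0$ (which can be read off the proof of Lemma~\ref{linklem}), together with a set of $100(k+1)$ disjoint $A$--$W_1$-paths nicely linking to $W_1$, of which we keep $24(k+1)$ as $\mathcal P$.

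Next, transfer $\mathcal L$ to a pure odd linkage $\mathcal L'$ of $W_1$ of size $48(k+1)$. Pick any $48(k+1)$ paths of $\mathcal L$; since $\mathcal L$ is pure, any subset remains pure. For each such path $L$ extend both of its endpoints through vertices of $W_0-(W_1-N_1)$ to reach distinct nails of $W_1$, where $N_1$ denotes the nails of $W_1$: drop down an unused vertical path of $W_0$ from the top row of $W_0$ to the layer just above the top row of $W_1$, traverse a horizontal path of $W_0-W_1$ to the appropriate column, and step down one rung to a nail of $W_1$. The $100(k+1)$-thick collar around $W_1$ has ample room for the $96(k+1)$ disjoint extensions needed, and choosing the target nails in the same relative order as the original endpoints on the top row of $W_0$ preserves the pure type (in series, crossing, or nested). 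Oddness of $\mathcal L'$ is automatic: since $W_0$ is bipartite and $W_1$ inherits its bipartition, a direct parity computation shows that for each extension $L'=R_1\cup L\cup R_2$ the quantity $|L'|+\chi(a')+\chi(b')$ equals $|L|+\chi(a)+\chi(b)$, where $\chi$ is a $2$-coloring of $W_0$ and $(a,b),(a',b')$ are the old resp.\ new endpoints; hence $L\cup W_0$ non-bipartite implies $L'\cup W_1$ non-bipartite.

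Finally, apply Lemma~\ref{pathlinkagetotop} to $W_1$ with $\mathcal L'$ and $\mathcal P$ to conclude. Potential overlaps between $\mathcal P$ and $\mathcal L'$ inside $W_0-W_1$ are harmless because Lemma~\ref{pathlinkagetotop} absorbs them internally via Lemma~\ref{pathnailslink}. The main technical obstacle is the linkage transfer: arranging $96(k+1)$ disjoint, order-preserving extensions through the collar $W_0-W_1$ while simultaneously preserving both pureness and oddness. Bipartiteness of $W_0$ gives oddness for free, and the deep containment of $W_1$ in $W_0$ supplies the room for the disjoint, order-preserving routing.
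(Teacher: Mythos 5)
Your proposal is correct and follows essentially the same route as the paper: reduce to Lemma~\ref{pathlinkagetotop} by using Lemma~\ref{linklem2} to obtain a deeply contained subwall with the required $A$-paths, extend the pure odd linkage through the bipartite collar (oddness and purity being preserved), and let Lemma~\ref{pathlinkagetotop} absorb any overlaps. The only cosmetic difference is that the paper first fixes a $200k$-contained subwall $W_1$ and applies Lemma~\ref{linklem2} inside it, rather than reading the containment off the proof of Lemma~\ref{linklem} as you do.
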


\begin{proof}
Set $t=24(k+1)$, and choose $r$ such that $r\geq 3t^2$ and $r\geq \plfun(k)$.
Next, define $\bfun(k)$ such that $\bfun(k)\geq 100\cdot 48(k+1)$ and 
$\bfun(k)\geq 4tr+400k$. Note that $\bfun$ only depends on $k$. 

Choose $W_1$ to be  a $200k$-contained subwall of $W_0$ of size $4tr$.
We first apply Lemma~\ref{linklem2}, and obtain a subwall $W_2$ of $W_1$ 
of size at least $r$ and a set of $t=24(k+1)$ disjoint $A$--$W_2$~paths
that nicely link to $W_2$ (here we use that the tangle induced by $W_0$
is a truncation of \eptan).
 
We extend $48(k+1) \leq \frac{\bfun(k)}{100}$ of the 
linkage paths in $\mathcal L$ through $W_0$ to a linkage $\mathcal L_0$
of $W_2$ of the same size (this is possible since $W_2$ is a subwall of $W_1$ that is $200k$-contained in $W_0$). 
As $W_0$ is bipartite, $\mathcal L_0$ is still odd. 
We conclude the proof by applying Lemma~\ref{pathlinkagetotop} to $W_2$. 
\end{proof}

Now we can finally finish with the proof of Theorem~\ref{zeroepp}.
Recall that the tangle \eptan\ of $G-A$ that we get from  Lemma~\ref{largetanglem}
has order $g(k)$. In particular, 
it follows from Theorem~\ref{robseywalltangle} and~\eqref{choiceg}
that $G-A$ contains a wall  $W\subseteq G-A$ 
of size $\fptfun(t^*(k))$ whose induced tangle is a truncation of $\eptan$. 
We apply Theorem~\ref{oddflatwallthm} to the wall~$W$. 

In the first three outcomes of Theorem~\ref{oddflatwallthm} we get $k$ disjoint zero $A$-paths:
for outcome~(a), this is proved in Lemma~\ref{cliqueminor} --- note that $t^*(k)\geq 16k$ by~\eqref{choicet}; 
for 
outcome~(b.i) this is proved in Lemma~\ref{nonzerowallminor} --- note that $t^*(k)\geq 2600k^3$ by~\eqref{choicet}; 
and for~(b.ii)
this is done in Lemma~\ref{linkagewallminor} --- note that $t^*(k)\geq\bfun(k)$ by~\eqref{choicet}.

It remains to treat outcome~(c), i.e., when there is a vertex set $Z$ of size 
$|Z| \leq \fptfun(k)$ such that the $\mathcal T_W$-large block $B$ of $G-A$ is bipartite.
If {there is no zero $A$-path such that its interior} is contained in a $B$-bridge in $G-A-Z$ then 
 Lemma~\ref{bipblockcase} finishes the proof, where we note that the hitting set size
there is bounded by $\pfun(k)\leq f(k)$, by~\eqref{choicef}. 

So, suppose there is some zero $A$-path $P$ in $G-Z$ that meets $B$ at most in a cutvertex $x$
of $G-A-Z$.
Then $Z\cup\{x\}$ is a set of size at most $\fptfun(k)+1<g(k)$, by~\eqref{choiceg}, 
	that separates $P$ from the $\mathcal T_W$-large block $B$ in $G-Z$.
As $\mathcal T_W$ is a truncation of \eptan, we obtain a contradiction to Lemma~\ref{largetanglem}. 
This concludes the proof of Theorem~\ref{zeroepp}.

\section{Conclusion}

We have proved that $A$-paths with a length congruent to~$0$ modulo~$4$ have the Erd\H os-P\'osa
property. What happens when we fix $d\in\{1,2,3\}$ and consider $A$-paths of length congruent to~$d$ 
modulo~$4$ instead? The answer for $d=2$ is an easy consequence of Theorem~\ref{zeroepp}.

\begin{proposition}
$A$-paths of length $2$ modulo $4$ have the Erd\H{o}s-P\'osa property.
\end{proposition}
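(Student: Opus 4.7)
My plan is to reduce to Theorem~\ref{zeroepp} by a subdivision trick that shifts every $A$-path length by exactly~$2$. Given $G$ and $A\subseteq V(G)$, I would build a graph $G'$ from $G$ as follows: replace every edge with both endvertices in $A$ by a path of length~$3$ (introducing two new vertices), replace every edge between $A$ and $V(G)\sm A$ by a path of length~$2$ (one new vertex), and leave every edge with both endvertices outside $A$ untouched. The set $A$ stays the same in $G'$, and no subdivision vertex is added to $A$.

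Every $A$-path $P$ of $G$ gives rise to a unique $A$-path $P'$ of $G'$ (its subdivided version), and conversely every $A$-path of $G'$ is of this form, because each subdivision vertex has degree~$2$ in $G'$ and does not belong to $A$. Moreover $|E(P')|=|E(P)|+2$ in every case: if $|E(P)|=1$ the unique edge is doubly subdivided, while if $|E(P)|\geq 2$ precisely the first and last edge of $P$ are incident with $A$ and each is subdivided once, and all interior edges of $P$ are unchanged. Consequently, $A$-paths of length~$2$ modulo~$4$ in $G$ correspond bijectively, and in a disjointness-preserving fashion, to $A$-paths of length~$0$ modulo~$4$ in $G'$.

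Now I would apply Theorem~\ref{zeroepp} to $G'$ with parameter~$k$. Either it yields $k$ disjoint zero $A$-paths in $G'$, which contract to $k$ disjoint $A$-paths of length~$2$ modulo~$4$ in $G$, or it produces a hitting set $X\subseteq V(G')$ of size at most $f(k)$. In the latter case I define $X'\subseteq V(G)$ by taking $X\cap V(G)$ and, for every subdivided edge $uv$ of $G$ that carries a subdivision vertex in $X$, adding $v$ (any fixed endvertex of the edge). Then $|X'|\leq |X|\leq f(k)$. An $A$-path $P$ in $G$ of length~$2$ modulo~$4$ that avoids $X'$ would lift to an $A$-path $P'$ in $G'$ of length~$0$ modulo~$4$ that avoids $X$, because every original vertex of $P'$ lies in $P$ (hence outside $X\cap V(G)$) and for every edge $uv$ of $P$ the endvertex $v$ lies outside $X'$, which by construction forces all subdivision vertices on $uv$ to lie outside $X$. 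This contradicts the choice of $X$, so $X'$ is the required hitting set.

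There is no substantial obstacle: the argument is a routine reduction, and the only minor point to verify is the uniformity of the $+2$ length shift, which is precisely why edges inside $A$ must be subdivided twice rather than once.
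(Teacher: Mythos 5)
Your reduction is correct and is essentially the paper's own argument: subdivide edges incident with $A$ so that every $A$-path length shifts by exactly $2$, then invoke Theorem~\ref{zeroepp}. The only (harmless) difference is that you handle edges with both ends in $A$ by subdividing them twice, whereas the paper simply discards them first, noting that such an edge is an $A$-path of length~$1$ and occurs in no other $A$-path.
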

\begin{proof}
Given a graph $G$ with a vertex set $A\subseteq V(G)$, we first observe that we may assume 
that $G$ has no edge with both endvertices in $A$. Indeed, any such edge is an $A$-path of length~$1$
and not contained in any $A$-path of any other length. 
Next, we subdivide every edge incident with a vertex in $A$ once. Call the resulting graph $G'$. 
Then, each $A$-path of length~$2$ modulo~$4$ in $G'$ corresponds to an 
$A$-path of length $0$ modulo $4$ in $G$, and vice versa. Applying Theorem~\ref{zeroepp} to $G'$
finishes  the proof.
\end{proof}


For $d=1$ or $d=3$, on the other hand, the Erd\H{o}s-P\'osa property is not satisfied. 
This can be seen by a construction that is very similar to one developed 
for $A$-paths of length~$0$ modulo~$m$, for a non-prime $m\geq 6$; see~\cite{BHJ18b}.

\begin{proposition}
For $d \in \{1,3\}$,
$A$-paths of length $d$ modulo $4$ do not have the Erd\H{o}s-P\'osa property.
\end{proposition}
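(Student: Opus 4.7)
The plan is to construct, for every positive integer $N$, a graph $G_N$ together with a set $A_N \subseteq V(G_N)$ such that (i) the maximum number of pairwise vertex-disjoint $A_N$-paths of length $d$ modulo $4$ in $G_N$ is bounded above by a constant independent of $N$, and (ii) every vertex set meeting all $A_N$-paths of length $d$ modulo $4$ in $G_N$ has size at least $N$. Taking $N$ arbitrarily large then refutes the Erd\H{o}s-P\'osa property: no function $f$ can bound the covering number by a function of the packing number.

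The construction I would use closely mirrors the one in~\cite{BHJ18b} witnessing the failure of the Erd\H{o}s-P\'osa property for $A$-paths of length $0$ modulo~$m$ when $m \ge 6$ is composite. The core object there is a highly-connected gadget whose $A$-paths encode, through their lengths modulo~$m$, a global constraint that simultaneously bounds the number of disjoint $A$-paths in the targeted residue class and forces every hitting set to be large. I would keep the overall structure of that gadget intact and only tune its edge-subdivisions: the subdivision lengths are chosen so that the targeted residue class becomes $d$ modulo~$4$ rather than $0$ modulo~$m$. A convenient way to implement the shift is to prepend an odd-length pendant to one of the distinguished $A$-vertices, which changes the residue class of every $A$-path by a prescribed odd amount modulo~$4$; combining such a shift with a suitable choice of base modulus $m$ (for example $m=8$) converts the forbidden residue class from $0$ modulo~$m$ to $d$ modulo~$4$.

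The main obstacle I anticipate is the following: working modulo~$4$ (rather than modulo a larger composite $m$) collapses several residue classes mod~$m$ into a single residue class mod~$4$, so one must ensure that the modified gadget does not acquire \emph{extra} $A$-paths of the target residue class beyond those already controlled by the original modular constraint. Ruling this out requires a careful choice of subdivision lengths that exploits the fact that $d$ is odd and that the relevant residue classes mod~$4$ are $1$ and $3$. Once this choice is set up, the modular counting argument of~\cite{BHJ18b} should transfer with minor modifications and yield both (i) and (ii). As the construction and verification would closely parallel those of~\cite{BHJ18b}, the full technical details would be omitted.
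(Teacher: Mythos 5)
Your overall instinct is right---the paper's own proof is indeed a re-tuned version of the grid gadget from~\cite{BHJ18b}---but as written your proposal has a genuine gap: it never specifies the construction, and the one step you correctly identify as the crux (ensuring that the collapse from residue classes mod~$m$ to residue classes mod~$4$ does not create extra $A$-paths in the target class) is flagged as an obstacle and then deferred rather than resolved. That step is essentially the entire content of the proof, so it cannot be omitted. Concretely, going through $m=8$ does not work off the shelf: an $A$-path of length $d$ modulo~$4$ may have length $d$ or $d+4$ modulo~$8$, so the class you must control is strictly larger than any single residue class mod~$8$, and the packing/covering analysis of the mod-$8$ gadget says nothing a priori about the extra paths. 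Also, ``prepending an odd-length pendant to one of the distinguished $A$-vertices'' only shifts the lengths of the paths ending at that one vertex; to shift a whole family of paths you must re-tune the attachment paths of many $A$-vertices simultaneously.

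What the paper actually does (and what you would need to write out) is a direct mod-$4$ computation on a large subdivided grid: every grid edge becomes a path of length~$4$ except those in the top row, which become paths of length~$2$; half of $A$ is attached to the left boundary by paths of length~$4$ and the other half to the right boundary by paths of length~$d+2$. Then left--left and right--right $A$-paths have even length, left--right $A$-paths avoiding the top row have length $d+2\not\equiv d \pmod 4$, so every $A$-path of length $d$ modulo~$4$ must run from left to right through the top row; planarity then forbids two disjoint such paths, while no set of $f(2)$ vertices can meet all of them in a grid of size $10f(2)$. Until you fix explicit subdivision lengths and verify both of these properties, the statement is not proved.
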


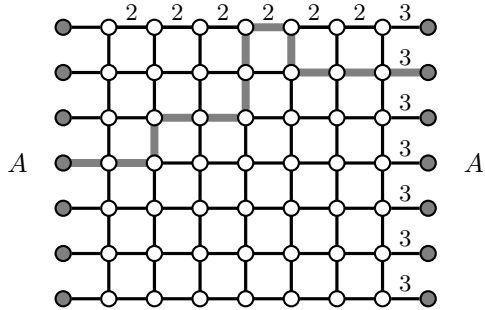
\begin{figure}[htb]
\centering
\begin{tikzpicture}
\tikzstyle{Apath}=[line width=3pt,grau];

\def\hstep{0.6}
\def\height{7}

\begin{scope}[on background layer]
\foreach \i in {1,...,\height}{
  \draw[hedge] (\hstep,\i*\hstep) -- (\height*\hstep,\i*\hstep);
  \draw[hedge] (\i*\hstep,\hstep) -- (\i*\hstep,\height*\hstep);
}
\end{scope}

\foreach \i in {2,...,\height}{
  \begin{scope}[on background layer]
    \draw[hedge] (\i*\hstep-\hstep,\height*\hstep) to coordinate[midway] (tl\i) (\i*\hstep,\height*\hstep);
  \end{scope}
  \path (tl\i) ++(0,0.2) node{{\small $2$}};
}

\foreach \i in {1,...,\height}{
  \foreach \j in {1,...,\height}{
    \node[hvertex] (t\i\j) at (\i*\hstep,\j*\hstep){};
  }
}

\foreach \j in {1,...,\height}{
  \node[hvertex, fill=grau] (la\j) at (0,\j*\hstep){};
  \node[hvertex, fill=grau] (ra\j) at (\height*\hstep+\hstep,\j*\hstep){};
  \begin{scope}[on background layer]
    \draw[hedge] (la\j) to (t1\j);
    \draw[hedge] (ra\j) to coordinate[midway] (rl\j) (t\height\j);
  \end{scope}
  \path (rl\j) ++(0,0.2) node{{\small $3$}};
}

\begin{scope}[on background layer]
\draw[Apath] (la4.center) -- (t14.center) -- (t24.center) -- (t25.center) -- (t35.center) -- (t45.center) -- (t46.center) -- (t47.center) -- (t57.center) -- (t56.center) -- (t66.center) -- (t76.center) -- (ra6.center);
\end{scope}

\node at (-\hstep,0.5*\height*\hstep+0.5*\hstep) {$A$};
\node at (\height*\hstep+2*\hstep,0.5*\height*\hstep+0.5*\hstep) {$A$};

\end{tikzpicture}
\caption{All unlabeled edges have length~$4$; an $A$-path of length $\equiv 1$ (mod~$4$) in grey.}\label{zeroAfig}
\end{figure}

\begin{proof}
Suppose that every graph either contains two disjoint $A$-paths of length~$d$ modulo~$4$,
or  a set of at most $f(2)$ many vertices that meet every such $A$-path.

Consider a grid of size $10f(2)$, and subdivide every edge in the grid, except for those in the top row, 
three times, such that they become paths of length~$4$. Subdivide the edges in the top row once, 
so that they turn into paths of length~$2$.
Add a set $A$ of $20f(2)$ new vertices, 
pick half of the  vertices in $A$, and connect each in the half 
 to a distinct branch vertex  
on the left boundary of the (subdivided) grid, via pairwise disjoint paths of length~$4$. 
We connect the other half of $A$ in the same way to the branch vertices on the right boundary of 
the (subdivided) grid, only we use paths of length $d+2$ instead of~$4$; 
see Figure~\ref{zeroAfig}.

Any $A$-path that starts and ends on the left, or starts and ends on the right,
 has even length, and in particular not length~$d$ modulo~$4$. Any $A$-path that starts
on the left and ends on the right but is disjoint from the top row has length~$d+2$ modulo~$4$.
Thus, the only $A$-paths of length~$d$ modulo~$4$ are those that start on the left, 
traverse at least one edge in the top row and then end on the right. Clearly, there cannot
be two disjoint such paths. 

Thus, by assumption, there should be a set of at most $f(2)$ 
vertices that meets every $A$-path of length~$d$ modulo~$4$. 
This, however, is easily seen to be false. 
 Therefore, the paths of length $d$ modulo $4$ do not have the Erd\H{o}s-P\'osa property.
\end{proof}

What about prime $m$?
\begin{problem}
For $m>2$ prime, 
do $A$-paths of length~$0$ modulo~$m$ have the Erd\H{o}s-P\'osa property?
\end{problem}
We suspect that the answer is ``yes''. Unfortunately, though, the methods
we use cannot easily be adapted to the prime case. The reason for this lies in our main 
tool, Theorem~\ref{oddflatwallthm} of Huynh, Joos and Wollan. 
Clearly, our simplified version of the theorem is useless for prime $m>2$
but also the original version will probably not help. This is because the original, stronger
version assumes a group labelling on the edges of an \emph{orientation} of the graph.
That is, if an edge $e$ is traversed in one direction we will pick up a 
group element~$\alpha$ (perhaps~$1$), but if $e$ is traversed in the opposite
direction then we pick up $-\alpha$. 
This feature makes it difficult to work out whether a certain path has length~$0$ 
modulo~$m$, as the length is inherently undirected: the length stays the same 
in whatever direction we traverse the edges. 

\subsection*{Acknowledgment}

We thank Felix Joos for fruitful discussions and the donation of Lemmas~\ref{twotypespaths} and~\ref{pathnailslink}.

\bibliographystyle{amsplain}
\bibliography{erdosposa}

\vfill

\small
\vskip2mm plus 1fill
\noindent
Version \today{}
\bigbreak

\noindent
Henning Bruhn
{\tt <henning.bruhn@uni-ulm.de>}\\
Arthur Ulmer
{\tt <arthur.ulmer@uni-ulm.de>}\\
Institut f\"ur Optimierung und Operations Research\\
Universit\"at Ulm\\
Germany\\

\end{document}